\title[Operator algebras associated to modules]{Operator algebras associated to modules over an integral domain}
\author{Benton L. Duncan}
\address{Department of Mathematics\\
North Dakota State University\\
Fargo, North Dakota\\
USA}
\email{benton.duncan@ndsu.edu}
\subjclass[2000]{47L74, 47L40}
\keywords{semicrossed product, integral domain, module}
\begin{document}

\theoremstyle{plain}
\newtheorem{thm}{Theorem}
\newtheorem{lem}{Lemma}
\newtheorem{prop}{Proposition}
\newtheorem{cor}{Corollary}

\theoremstyle{definition}
\newtheorem{dfn}{Definition}
\newtheorem*{construction}{Construction}
\newtheorem*{example}{Example}

\theoremstyle{remark}
\newtheorem*{conjecture}{Conjecture}
\newtheorem*{acknowledgement}{Acknowledgements}
\newtheorem{remark}{Remark}

\begin{abstract} We use the Fock semicrossed product to define an operator algebra associated to a module over an integral domain. We consider the $C^*$-envelope of the semicrossed product, and then consider properties of these algebras as models for studying general semicrossed products. 
\end{abstract}

\maketitle

\section{Background on semicrossed products}

Although the study of semicrossed products (as objects of study) could be traced to \cite{Peters} and its precedents \cite{Arveson, ArvesonJosephson} the theory, as studied now was revived by \cite{DavidsonKatsoulis} where semicrossed products by $ \mathbb{Z}_+$ were shown to be complete invariants for topological dynamics.  This recent interest in semicrossed products as a class of operator algebras has seen significant recent growth, see for example \cite{Simple, Envelope, dilation, KK}. The primary focus of many of these new results is in the expansion of what semigroups are acting on the underlying $C^*$-algebra.  

In particular, in \cite{multivariable} an attempt was made to understand actions of the free semigroup on $n$-generators acting on a compact Hausdorff space. Other research focused on abelian semigroups, see \cite{DuncanPeters} and \cite{Fuller}. The paper \cite{DFK} formalized many of the different ideas used in the theory by combining constructions and different viewpoints into one overarching theme: studying the dilation theory of different classes of semicrossed products that are as much a function of the semigroup as the action on the underlying $C^*$-algebra. 

During this same time period the author in \cite{Duncan} introduced a construction, similar in spirit to \cite{CuntzLi} although different in perspective. Focusing on the non-selfadjoint theory this class of algebras was recognized as a semicrossed product \cite[Section 3]{Duncan}. Here we take the algebraic notion to the next stage. Here we consider a semicrossed product associated to a module over an integral domain $D$.  Of course  \cite{Duncan} is a special case of this as any ring is a module over itself.

Here we let a domain ``act'' on the group $C^*$-algebra associated to the module, via multiplication. As we view the group as a discrete group the actions are continuous and give rise to $*$-endomorphisms of the group $C^*$-algebras. For the most part we are undertaking this study to provide examples and motivation for questions in the broader area of semicrossed products.

For these purposes we first consider a general approach to operator algebras associated to a module over an integral domain, defining those properties that such an operator algebra should have.  We then justify the connection to semicrossed products by showing that an associated Fock semicrossed product will satisfy the list of requirements for an operator algebra associated to a module over an integral domain.  This allows us to identify the $C^*$-envelope of the algebra. 

We then use this connection to motivate and begin the study of questions for general semicrossed product and (often) the underlying dynamics. In the first such example we consider short exact sequences of modules and look at structures that arise in the operator algebra context. This leads us to propose looking at semicrossed products from a categorical point of view. Another example that we investigate is the notion of finite generation of a module over an integral domain. This gives rise to a similar notion of semicrossed products. We close with some results relating finite generation in semicrossed products to results on dynamics, in limited contexts. We also point out the difficulties inherent in considering this notion for arbitrary semicrossed products. We do, however, expect that these approaches will provide examples of results to look for in investigating more general semicrossed products.

In what follows we assume familiarity with the results and notation of \cite{DFK}.

\section{Operator algebras of modules over integral domains}

Let $R$ be an integral domain and $M$ be an $R$-module.  The multiplicative semigroup of $R$ (call it $R^{\times}$) acts on $M$ via the mapping $r(m) = r\cdot m$.  If we view $M$ as a topological group with the discrete metric then this action will induce an action of $R^{\times}$ on $C^*(M) = C(\widehat{M})$ (this latter is the continuous functions on the compact Pontryagin dual of $M$).  However this action is not necessarily as homeomorphisms and hence semicrossed products are a natural approach to understanding this action using operator algebras.  In particular the action $ \sigma_r: C^*(M) \rightarrow C^*(M)$ is invertible if and only if $r$ is invertible in $R$.

For this reason we define the operator algebra of a module over an integral domain to be the semicrossed product $ C^*(M) \times_{F} R^{\times}$.  In general one can consider a more general construction (which parallels the semicrossed product constructions of \cite{DFK}).  We first consider a specific representation.  Consider the Hilbert space $ \mathcal{H} := \ell^2(M) \otimes \ell^2(R^{\times})$.  For each $r \in R^{\times}$ we define $ S_r: \mathcal{H} \rightarrow \mathcal{H}$ on elementary tensors by $ S_r( v_m \otimes u_s) = v_{m} \otimes u_{rs}$ and then by extension we have $S_r \in B(\mathcal{H})$.  At the same time for each $m \in M$ we define $U^m(v_n \otimes u_r) = v_{mr+n} \otimes u_r$, and again extending to all of $ \mathcal{H}$.

We notice a few facts about the operators $S_r$ and $U^m$.  

\begin{prop} Let $M$ be an $R$ module and consider $m, n \in M$ and $r,s \in R^{\times}$.  Then the following are true:

\begin{enumerate}
\item \label{unitaries} $U^m$ is a unitary with $U^{-m} = (U^m)^*$. 
\item \label{isometries} $S_r $ is an isometry and if $r$ is invertible then $S_r$ is a unitary with $S_r^* = S_{r^{-1}}$.
\item \label{grouprepn} The map $ \mu: M \rightarrow B(\mathcal{H})$ induced by $ \mu(m) = U^m$ is a group representation with $ C^*(\mu(M)) \cong C_r^*(M)$ (the latter being the group $C^*$-algebra associated to $M$).
\item \label{semigrouprepn} The map $ \rho: R^{\times} \rightarrow B(\mathcal{H})$ given by $ \rho(r) = S_r$ is a semigroup representation.
\item \label{respectsmodule} The operators respect the module structure as follows: 
\begin{itemize} 
\item $U^mS_r = S_rU^{rm}$, 
\item $U^{m}S_{r+s} = S_{r+s}U^{rm}U^{sm}$, 
\item $U^{m+n}S_r = S_rU^{rm}U^{rn}$, and
\item $S_1 = U^0= 1_{\mathcal{H}}$, 
\end{itemize}
\end{enumerate} \end{prop}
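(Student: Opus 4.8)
The plan is to treat parts (\ref{unitaries}), (\ref{isometries}), (\ref{semigrouprepn}) and (\ref{respectsmodule}) as direct verifications on the orthonormal basis $\{v_m \otimes u_s : m \in M,\ s \in R^\times\}$ of $\mathcal{H}$, reserving the real work for part (\ref{grouprepn}). For (\ref{unitaries}), I would observe that for fixed $s$ the assignment $n \mapsto ms + n$ is a bijection of $M$ (because $M$ is a group), so $U^m$ merely permutes the basis and is therefore unitary; since the inverse permutation $n \mapsto n - ms$ is exactly the action of $U^{-m}$, the adjoint (which equals the inverse for a permutation unitary) is $U^{-m}$. For (\ref{isometries}), the map $s \mapsto rs$ on $R^\times$ is injective precisely because $R$ is an integral domain, so a nonzero $r$ cancels; hence $S_r$ sends the basis injectively into itself and is an isometry, and this map is onto iff $r$ is a unit, in which case $S_r$ is a permutation unitary with inverse (and adjoint) $S_{r^{-1}}$. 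Part (\ref{semigrouprepn}) is the one-line computation $S_r S_s (v_m \otimes u_t) = v_m \otimes u_{rst} = S_{rs}(v_m \otimes u_t)$.

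For (\ref{respectsmodule}), I would evaluate each identity on a basis vector $v_n \otimes u_t$; every identity reduces to an equality of shifts in the first tensor leg, and these follow from commutativity and associativity of the $R$-action, for instance $(rm)t = (rt)m$ and $((r+s)t)m = (rt)m + (st)m$. The only point needing care here is that $S_{r+s}$ is defined only when $r + s \neq 0$, so the second identity should be read under that hypothesis.

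The substance is part (\ref{grouprepn}). Multiplicativity, $U^m U^n = U^{m+n}$ and $U^0 = 1$, is immediate from the defining formula, so $\mu$ is a unitary representation of the discrete abelian group $M$. To identify the generated $C^*$-algebra, I would restrict attention to the slice $\ell^2(M) \otimes u_1$: there $U^m$ acts as $v_n \mapsto v_{m + n}$, which is exactly the left regular representation $\lambda$ of $M$. Thus $\mu$ contains $\lambda$ as a subrepresentation; more generally $\mu \cong \bigoplus_{r \in R^\times} (\lambda \circ \phi_r)$ with $\phi_r(m) = rm$, and each summand is, via the coset decomposition of $\ell^2(M)$ along $rM$ together with the isomorphism $M \cong rM$, a multiple of $\lambda$. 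Since $M$ is amenable, $\mu$ integrates to a representation $\tilde\mu$ of $C_r^*(M) = C^*(M)$ whose image is $C^*(\mu(M))$.

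Finally I would argue faithfulness of $\tilde\mu$: its kernel is a closed ideal annihilated by the subrepresentation $\lambda$, and $\lambda$ is faithful on $C_r^*(M)$, so the kernel is trivial and $\tilde\mu$ is a $*$-isomorphism onto $C^*(\mu(M))$. I expect this identification to be the main obstacle, the subtlety being that $\mu$ is a priori a direct sum of the \emph{twisted} regular representations $m \mapsto \lambda_{rm}$ rather than the regular representation itself; isolating the untwisted slice $r = 1$ is what makes the argument clean, while checking that the twisting by multiplication-by-$r$ does no harm is exactly where the integral-domain hypothesis (injectivity of $m \mapsto rm$) enters.
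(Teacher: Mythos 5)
Your argument is, in substance, the paper's own proof. Parts (1), (2), (4), (5) are direct verifications on the orthonormal basis (the paper computes adjoints via inner products where you use permutation/injectivity of the basis map --- a cosmetic difference), and for part (3) both you and the paper restrict to the invariant slice $\ell^2(M) \otimes u_1$, identify the restriction with the left regular representation $\lambda$, and invoke amenability of the abelian group $M$ to get $C^*(M) = C^*_r(M)$ and hence faithfulness; your explicit remark that $\ker\tilde\mu \subseteq \ker\lambda$ because that slice is reducing just spells out what the paper leaves implicit. Your caveat that the identity $U^m S_{r+s} = S_{r+s}U^{rm}U^{sm}$ must be read with $r+s \neq 0$ (since $0 \notin R^{\times}$) is correct and is a point the paper glosses over.

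One claim in your part (3), however, is false, and your closing sentence mislocates the role of the hypotheses. You assert that each twisted summand $\lambda \circ \phi_r$ (where $\phi_r(m) = rm$) is a multiple of $\lambda$ ``via the coset decomposition of $\ell^2(M)$ along $rM$ together with the isomorphism $M \cong rM$,'' attributing the injectivity of $m \mapsto rm$ to the integral-domain hypothesis. Being a domain gives cancellation in $R^{\times}$ --- that is what you correctly use in part (2) --- but it says nothing about torsion in the module $M$: for $M = R/rR$ with $r$ a nonzero nonunit one has $rm = 0$ for every $m \in M$, so $\lambda \circ \phi_r$ is the trivial representation, which is not a multiple of $\lambda$. (The paper's very next proposition, distinguishing the Fock and unitary semicrossed products according to whether $M$ is torsion free, turns exactly on this point.) Fortunately the claim is not load-bearing: faithfulness needs only that the kernel of the integrated representation is contained in the kernel of the single untwisted summand at $r = 1$, which is zero. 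So your proof of (3) stands once you delete the ``more generally'' aside; but the ``twisting does no harm'' check that you describe as the main obstacle is neither available in general nor needed.
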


\begin{proof}
\begin{enumerate}

\item We calculate that \[ \langle U^n(v_m \otimes u_r), v_l \otimes u_s \rangle = \begin{cases} 1 & r=m+nr \mbox{ and } r=s \\ 0 & \mbox{ otherwise} \end{cases}  \] and that \[ \langle v_m \otimes u_r, U^{-n} (v_l \otimes u_s) = \begin{cases} 1 & m = l-ns \mbox{ and } s = r \\ 0 & \mbox{ otherwise} \end{cases}.\]
It follows using linearity that $(U^n)^* = U^{-n}$ and then calculating we see that $U^nU^{-n} = 1_{B(\mathcal{H}} = U^{-n}U^n$ which tells is that each $U^n$ is a unitary.

\item We begin by calculating the adjoint of $S_r$.  To do this we first write $H_r$ to be the subspace of $\mathcal{H}$ spanned by elements of the form $ v_m \otimes u_{rt}$ where $ m \in M $ and $ t \in R^{\times}$. We define $T_r( v_m \otimes u_{rt}) = v_m \otimes u_t$ on $H_r$ and $T_r $ is zero on the orthgonal complement of $H_r$.

Now \begin{align*} \langle S_r( v_m \otimes u_s), v_n \otimes u_t \rangle & = \langle v_m \otimes u_{sr}, v_n \otimes u_t \rangle \\ & = \begin{cases} 1 & \mbox{ if } t = sr \\ 0 & \mbox{ otherwise} \end{cases} \end{align*} and similarly we have \[ \langle  v_m \otimes u_s, T_r(v_n \otimes u_t) \rangle = \begin{cases} 1 & \mbox{ if } t = rs \\ 0 & \mbox{ otherwise} \end{cases} \] and hence $T_r$ is the adjoint of $S_r$. 

We then can calculate that $ T_rS_r ( u_n \otimes v_s) = T_r(u_n \otimes v_{rs}) = u_n \otimes v_s$ and hence $S_r$ is an isometry with range equal to $H_r$.  Notice that if $r$ is invertible then for any $ m \in M$ and $ t \in R^{\times}$ then $ v_m \otimes u_t = v_m \otimes u_{rr^{-1}t}$ and hence $H_r = \mathcal{H}$ which tells us that $S_r$ is surjective and hence is a unitary with $T_r$. In this case a simple calculation will show us that $S_{r^{-1}} = T_r$.

\item
Notice that this representation induces a unitary representation of $M$ acting on $B( \mathcal{H})$.  Also if we consider the subspace spanned by $ v_m \otimes u_1$ for all $ m \in M$ then the restriction of our representation to this subspace will give the left regular representation of $M$ acting on a Hilbert space isomorphic to $\ell^2(M)$. Since $M$ is abelian and hence amenable it follows that the representation must be a faithful representation of $C^*(M)$.

\item
This follows by noting that $ S_{rs} (v_m \otimes u_t) = (v_{r} \otimes u_{rst} ) = S_r( v_{m} \otimes u_{st}) = S_r(S_s)(v_m \otimes u_t)$ and hence $S_{rs} = S_rS_s$ yielding a semigroup representation.  

\item
We verify the first property. First we see that \begin{align*} U^mS_r(v_n \otimes u_s) & = U^m( v_{n} \otimes u_{sr})  \\ & = v_{n+mrs} \otimes u_{sr} \\ & = S_r(v_{n+mrs} \otimes u_s) \\ & = S_rU^{rm} (v_n \otimes u_s )\end{align*} and then extending to all of $B( \mathcal{H})$ we have the indicated identity.  The second and third properties here follow from the first. The final is a simple calculation.
\end{enumerate}

\end{proof}

Given a Hilbert space $H$ and two collections of operators $ \mathcal{U} = \{ u^m: m \in M \}$ and $ \mathcal{S} = \{ s^r: r \in R^{\times} \}$ we define a pair of maps $\mu: M \rightarrow B(\mathcal{H})$ by $ \mu(m) = u^m$ and $ \rho: R^{\times} \rightarrow B(\mathcal{H})$ by $ \rho(r) = s_r$.  We say that the pair $ (\mu, \rho)$ is an isometric representation of $M$ with respect to $R$ if (1-5) of the above proposition are satisfied for the collection $ \mathcal{U}$ and $ \mathcal{S}$, it is said to be unitary if the family $ \mathcal{S}$ consists of unitaries.  The previous proposition gives us a canonical isometric representation of $M$ with respect to $R$, which we call the Fock representation. 

As in \cite{DFK} we can consider other semicrossed products associated to the action of $ R^{\times}$ acting on $C^*(M)$.  In particular there are three standard operator algebras associated to such an action: the Fock algebra which we denote $C^*(M) \times_F R^{\times}$, the isometric semicrossed product which we denote $C^*(M) \times_i R^{\times}$ and the unitary semicrossed product which we denote by $C^*(M) \times_u R^{\times}$.  

The algebra $C^*(M) \times_F R^{\times}$ is the norm closed algebra inside $B(\ell^2(M) \otimes \ell^2(R^{\times}))$ generated by the families $ \{ S_r \}$ and $\{ U^m \}$.  This algebra will, for the most part, be the focus of this paper due to its concrete realization as acting on a Hilbert space.  The algebras $C^*(M) \times_i R^{\times}$ and $C^*(M) \times_u R^{\times}$ can be constructed by considering the algebra $A_0 $ consisting of finite sums of the form $ \sum s_ra_r$ such that $a_r \in C^*(M)$ with a convolution multiplication that respects $a s_r= s_r\alpha_r(a)$, where $\alpha_r$ is the action on $C^*(M)$ induced by $r$ acting on $M$.  One notices that any isometric (unitary) representation of $M$ with respect to $R$ gives rise to a representation of the algebra $A_0$ acting on the same Hilbert space.  It follows that one can then norm $A_0$ by taking the supremum over all isometric (unitary) representations of $M$ with respect to $R$.  Completing the algebras with respect to the induced norms gives rise to the semicrossed products $C^*(M) \times_i R^{\times}$ and $C^*(M) \times_u R^{\times}$, respectively.  

\begin{prop} The Fock semicrossed product $C^*(M) \times_F R^{\times}$ is isomorphic to the unitary semicrossed product $C^*(M) \times_u R^{\times}$ if and only if $M$ is torsion free. \end{prop}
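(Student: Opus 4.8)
The plan is to treat both algebras as completions of the common dense subalgebra $A_0$ and to compare the two norms $\|\cdot\|_F$ and $\|\cdot\|_u$ that they induce on it. Two preliminary observations organize everything. First, on a generator the covariance relation reads $\alpha_r(u^n) = u^{rn}$, so for fixed $r \in R^\times$ the endomorphism $\alpha_r$ is injective exactly when multiplication by $r$ is injective on $M$; hence all the $\alpha_r$ are injective precisely when $M$ is torsion free. Second, the multiplicative semigroup $R^\times$ is abelian and cancellative (because $R$ is a domain), hence an Ore semigroup inside the group $K^\times$ of the field of fractions $K$, and in particular amenable. I would invoke from \cite{DFK} that for such a semigroup the Fock representation faithfully represents the full isometric semicrossed product, so that $\|x\|_F = \|x\|_i$; since every unitary representation is in particular an isometric representation, this gives $\|x\|_u \le \|x\|_i = \|x\|_F$ for every $x \in A_0$, independently of any torsion hypothesis. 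Thus there is always a canonical completely contractive homomorphism $C^*(M)\times_F R^\times \to C^*(M)\times_u R^\times$, and the proposition reduces to deciding when it is isometric.

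For the direction that torsion freeness is sufficient, I would produce the reverse inequality $\|x\|_F \le \|x\|_u$ by dilating the Fock representation to a genuine unitary one. Torsion freeness is exactly what lets me pass from the $R$-module $M$ to the $K$-vector space $M_K := M \otimes_R K$, into which $M$ embeds and on which $G := K^\times \supseteq R^\times$ acts. On $\ell^2(M_K) \otimes \ell^2(G)$ I would define $\tilde S_r(v_\nu \otimes u_g) = v_\nu \otimes u_{rg}$ and $\tilde U^m(v_\nu \otimes u_g) = v_{\nu + mg} \otimes u_g$; here each $\tilde S_r$ is a unitary, since multiplication by $r$ is now invertible on $G$, and one checks that $(\tilde U, \tilde S)$ is a unitary representation of $M$ with respect to $R$, clause \ref{grouprepn} coming from the copy of the regular representation of $M$ on $\{v_n \otimes u_1 : n \in M\}$ together with amenability. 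The key point is that $\mathcal H = \ell^2(M) \otimes \ell^2(R^\times)$ is invariant under every $\tilde S_r$ and $\tilde U^m$ (because $rR^\times \subseteq R^\times$, and $n + ms \in M$ for $n,m \in M$, $s \in R^\times$) and that the restriction of $(\tilde U, \tilde S)$ to $\mathcal H$ is exactly the Fock representation. Restriction to an invariant subspace is a completely contractive homomorphism, so $\|x\|_F \le \|\tilde\pi(x)\| \le \|x\|_u$; combined with the first paragraph the two norms agree and the canonical map is a completely isometric isomorphism.

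For the converse I would argue by contraposition. If $M$ has a nonzero torsion element $m$, say $rm = 0$ with $r \in R^\times$, then in $A_0$ the covariance relation forces $(u^m - 1)s_r = s_r(\alpha_r(u^m) - 1) = s_r(u^{rm} - 1) = s_r(u^0 - 1) = 0$. In any unitary representation $\sigma$ the operator $\sigma(s_r)$ is invertible, so this identity gives $\sigma(u^m) = 1$; hence $\|u^m - 1\|_u = 0$. On the other hand $\|u^m - 1\|_F > 0$, since by \ref{grouprepn} the Fock representation is faithful on $C^*(M)$ and $u^m \ne 1$ in $C^*(M) = C(\widehat M)$ when $m \ne 0$. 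Thus the canonical map annihilates the nonzero element $u^m - 1$ and cannot be an isomorphism. The difference is moreover structural: the canonical copy of $C^*(M)$, realized as the fixed-point algebra of the gauge action $\gamma_\chi(U^m) = U^m$, $\gamma_\chi(S_r) = \chi(r) S_r$ for $\chi \in \widehat G$, is all of $C(\widehat M)$ in the Fock algebra but only the proper quotient $C(\widehat{M/M_{\mathrm{tor}}})$ in the unitary algebra.

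The main obstacle is the sufficiency direction, and specifically the dilation step: the real content is the recognition that torsion freeness is precisely the hypothesis permitting the non-surjective isometries $S_r$ to be dilated to the bilateral unitaries $\tilde S_r$ while the Fock space $\mathcal H$ stays invariant, and without it the localization $M \otimes_R K$ discards the torsion part and the construction degenerates. The routine-looking but genuinely necessary verifications are that $(\tilde U, \tilde S)$ satisfies all of (1)--(5), in particular that the dilated group representation remains faithful on $C^*(M)$ as in \ref{grouprepn}, and the citation from \cite{DFK} that for the amenable Ore semigroup $R^\times$ the Fock norm equals the full isometric norm, which supplies the one inequality the dilation does not.
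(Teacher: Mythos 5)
Your converse direction is essentially correct and is in fact a more explicit version of the paper's own argument: torsion gives $(u^m-1)s_r = s_r(u^{rm}-1)=0$, so $u^m=1$ in any unitary covariant representation, while $U^m\neq 1$ in the Fock representation because it is faithful on $C^*(M)$; this is exactly the paper's observation that $C^*(M)\times_u R^{\times}$ contains only a quotient of $C^*(M)$ whereas $C^*(M)\times_F R^{\times}$ contains a faithful copy. Your dilation argument is also sound as far as it goes: when $M$ is torsion free the operators $\tilde S_r,\tilde U^m$ on $\ell^2(M\otimes_R K)\otimes \ell^2(K^{\times})$ satisfy the covariance relations, leave $\ell^2(M)\otimes\ell^2(R^{\times})$ invariant, and restrict there to the Fock representation, which correctly yields $\|x\|_F\le\|x\|_u$.

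The gap is in the opposite inequality $\|x\|_u\le\|x\|_F$, which you obtain from the chain $\|x\|_u\le\|x\|_i=\|x\|_F$, attributing the equality to \cite{DFK} as a fact valid for every Ore semigroup, every action, and ``independently of any torsion hypothesis.'' No such theorem is available there, and it is incompatible with this paper: immediately after this proposition the paper asserts that the isometric semicrossed product is \emph{not} isomorphic to the other two even when $M$ is torsion free, unless $R$ is a field, since \cite[Theorem 3.5.6]{DFK} (which is what merges the isometric algebra with the others) requires an automorphic action. Worse, your claimed equality combined with your own dilation inequality would force $C^*(M)\times_i R^{\times}\cong C^*(M)\times_u R^{\times}\cong C^*(M)\times_F R^{\times}$ for all torsion-free $M$, directly contradicting that remark; the identity $\|\cdot\|_i=\|\cdot\|_F$ is a $\mathbb{Z}_+$ phenomenon (Peters' theorem) that you cannot simply transport to $R^{\times}$. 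Note also that the architecture is inverted relative to the actual difficulty: your dilation proves the easy half, while the citation is made to carry precisely the hard half, namely that the Fock representation dominates all unitary (a fortiori all isometric) covariant pairs. That hard half is exactly the content of \cite[Theorem 3.5.4]{DFK} --- for an Ore semigroup acting by \emph{injective} $*$-endomorphisms, $C^*(M)\times_F R^{\times}\cong C^*(M)\times_u R^{\times}$ --- which is the paper's entire proof of the forward direction, and whose injectivity hypothesis (equivalent to torsion-freeness here) shows the inequality is not torsion-independent in any obvious way. To repair your proof you must either invoke that theorem as stated, or prove the hard inequality directly, e.g.\ by extending a unitary covariant pair of $(C^*(M),R^{\times})$ to the automorphic direct-limit system $(C^*(N),Q(R)^{\times})$ and using amenability of $Q(R)^{\times}$ to compare with the reduced crossed product, into which the Fock algebra embeds completely isometrically; your proposal does neither.
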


\begin{proof}
The forward direction of this result follows from \cite[Theorem 3.5.4]{DFK} by noting that if $M$ is torsion free then the action of $r$ on $M$ is injective for every $ r \in R^{\times}$.  Hence $R^{\times}$ acts injectively on $C^*(M)$ and the result applies.  For the alternative, again from \cite{DFK} it is noted that if the action of $R^{\times}$ is not injective (i.e. has torsion) then $C^*(M) \times_u R^{\times}$ does not contain $C^*(M)$ but rather only a quotient of $C^*(M)$.  However $C^*(M) \times_F R^{\times}$ always contains a copy of $C^*(M)$ and hence the two algebras are not isomorphic.
\end{proof}

It is not the case that the isometric semicrossed product is isomorphic to the other two semicrossed products, even in the case that the module is torsion free, since the action of $R^{\times}$ on $M$ acts as automorphisms if and only if $R$ is a field.  In this latter case \cite[Theorem 3.5.6]{DFK} we get that these algebras are isomorphic.  In this case the algebra $C^*(M) \times_F(R^{\times})$ is a crossed product by an abelian (and hence amenable) group which means that the universal crossed product and the reduced crossed products are isomorphic.  When $R$ is not a field it is straightforward to see that $C^*(M) \times_F R^{\times}$ is not a $C^*$-algebra and hence one important consideration is the $C^*$-algebra it generates.  There is a canonical $C^*$-algebra in which an operator algebra embeds, this algebra is the $C^*$-envelope and is thought of as the ``smallest'' $C^*$-algebra which the nonselfadjoint algebra generates.  Following \cite{DFK} we consider the $C^*$-envelopes of the algebra $C^*(M) \times_F R^{\times}$ in the case that $M$ is torsion free.   The only added complexity is verifying that the construction still produces an $R$-module.

\begin{thm} Let $R$ be an integral domain and $M$ be an $R$-module.  There exists a $Q(R)$-module $N$ and an $R$-module injection $i: M \rightarrow N$ such that $C^*(N) \times_F Q(R)^{\times}$ is the $C^*$-envelope of $C^*(M) \times_F R^{\times}$ \end{thm}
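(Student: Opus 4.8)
The plan is to realize $N$ as the localization of $M$ at the multiplicative set $R^{\times}$ and to recognize $C^*(N) \times_F Q(R)^{\times}$ as the minimal automorphic dilation that \cite{DFK} identifies as the $C^*$-envelope. First I would set $N = Q(R) \otimes_R M$, equivalently the module of fractions $\{ m/r : m \in M,\, r \in R^{\times}\}$ with $m/r = m'/r'$ whenever $r'm = rm'$ (the usual localization relation simplifies this way because $M$ is torsion free, which is the standing hypothesis here). This $N$ is a $Q(R)$-module under $(p/q)(m/r) = (pm)/(qr)$, and $i \colon M \to N$, $i(m) = m/1$, is an $R$-module map which is injective precisely because $M$ has no torsion. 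I would also note that the multiplicative semigroup $R^{\times}$ is abelian and cancellative (as $R$ is a domain), so it is an Ore semigroup whose enveloping group is exactly $Q(R)^{\times}$, the nonzero elements of the fraction field; every element of $Q(R)^{\times}$ acts on $N$ as an automorphism since $Q(R)$ is a field.

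The second step is to identify $C^*(N)$ with the minimal automorphic dilation of the system $\sigma \colon R^{\times} \to \operatorname{End}(C^*(M))$, where $\sigma_r$ is induced by multiplication by $r$ on $M$. Because $M$ is torsion free each such multiplication is injective, so each $\sigma_r$ is an injective $*$-endomorphism, which is the hypothesis needed to apply the dilation results of \cite{DFK}. Now $N$ is the direct limit $\varinjlim_r M$ over $R^{\times}$ ordered by divisibility, with connecting map (from the copy indexed by $r$ to the one indexed by $rs$) given by multiplication by $s$. Passing to Pontryagin duals turns this into an inverse limit $\widehat{N} = \varprojlim_r \widehat{M}$ of compact groups, and applying $C(-)$ returns a direct limit of $C^*$-algebras, so that $C^*(N) = C(\widehat N) \cong \varinjlim_r C(\widehat M) = \varinjlim_r (C^*(M), \sigma_r)$. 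Under this identification the automorphic $Q(R)^{\times}$-action on $C^*(N)$ coming from multiplication on $N$ is exactly the dilated action $\widetilde{\sigma}$ of the enveloping group.

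With these identifications in hand the theorem follows from the $C^*$-envelope computation of \cite{DFK}: for an abelian (Ore) semigroup acting on a $C^*$-algebra by injective endomorphisms, the $C^*$-envelope of the Fock semicrossed product is the crossed product of the direct-limit algebra by the enveloping group acting via the dilated automorphisms. Thus the $C^*$-envelope of $C^*(M) \times_F R^{\times}$ is $C^*(N) \rtimes_{\widetilde{\sigma}} Q(R)^{\times}$. Finally, because $Q(R)^{\times}$ acts by automorphisms on $C^*(N)$ and is abelian hence amenable, this crossed product coincides with the Fock (= unitary) semicrossed product $C^*(N) \times_F Q(R)^{\times}$, exactly as recorded in the discussion following the previous proposition. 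Composing the maps $C^*(M) \hookrightarrow C^*(N)$ (induced by $i$) and $R^{\times} \hookrightarrow Q(R)^{\times}$ gives the completely isometric embedding realizing $C^*(M) \times_F R^{\times}$ inside this envelope.

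The step I expect to be the main obstacle is the identification in the second paragraph --- precisely the ``added complexity'' flagged before the statement. The abstract machinery of \cite{DFK} produces the $C^*$-envelope as an a priori anonymous direct limit $\varinjlim (C^*(M), \sigma_r)$ together with an automorphic action of the enveloping group; the real work is to check that this direct limit is itself the group $C^*$-algebra of a genuine $Q(R)$-module, namely $N$, and that the dilated action is nothing but scalar multiplication by $Q(R)^{\times}$. This requires matching the categorical direct limit of the modules $M$ under the multiplication maps with $N$, verifying that Pontryagin duality intertwines this colimit of discrete groups with the inverse limit of their duals, and confirming that the connecting $*$-homomorphisms induced on $C(\widehat M)$ agree with the $\sigma_r$ on the nose. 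Once the module-theoretic direct limit is pinned down as the localization $N$ and seen to be torsion free, the remaining verifications are routine.
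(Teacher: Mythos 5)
Your proposal is correct and follows essentially the same route as the paper: the paper defines $N$ directly as the direct limit $\varinjlim (M_r, \alpha_{t,s})$ over $R^{\times}$ ordered by divisibility (which is exactly your localization $Q(R)\otimes_R M$), observes that the functor from abelian groups to $C^*$-algebras preserves direct limits so that $C^*(N)$ is the dilated algebra of \cite{DFK}, and then invokes the $C^*$-envelope theorem there. The extra verifications you supply --- torsion-freeness giving injectivity of $i$ and of the endomorphisms, and amenability identifying the enveloping crossed product with the Fock algebra --- are precisely the points the paper's terser proof leaves implicit.
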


\begin{proof}
We mimic the construction from \cite[Section 3.2]{DFK} working at the level of $R$-modules rather than $C^*$-algebras.

We note first that $Q(R)^{\times}$ is the enveloping group for $R^{\times}$. We put a partial ordering on $R^{\times}$ given by $s \leq t$ if in $Q(R)^{\times}$ we have that $ts^{-1} \in R^{\times}$. Then for $r \in R^{\times}$ we let $M_r = M$ and define connecting maps $ \alpha_{t,s}: M_s \rightarrow M_t$ when $ s \leq t$ by $ \alpha_{t,s} (m) = ts^{-1}(M)$. Then $N:= \lim_{\rightarrow} (M_r, \alpha_{t,s})$ is an $R$-module such that the $r$ action is now surjective and injective on $N$, hence we can define the $r^{-1}$ action on $N$ by inverting the $r$-action giving an action of $Q(R)^{\times}$ on $N$.

Noting that the functor from abelian groups to $C^*$-algebras preserves direct limits it follows that $C^*(N)$ is the algebra constructed in \cite{DFK} and the result now follows from \cite[Theorem 3.2.3]{DFK}.
\end{proof}

Since the $C^*$-algebras in our semicrossed products arise from group $C^*$-algebras we can use information about group $C^*$-algebras to improve our analysis.  In the following section we consider some results that will be helpful.

\section{Some useful results on groups, semigroups, and semicrossed products}

Let $M$ and $N$ be discrete abelian groups with $N$ a normal subgroup of $M$.  This relationship induces a short exact sequence \[ 1 \rightarrow N \rightarrow M \rightarrow M/N \rightarrow 1. \]

While it is not the case that a short exact sequence of groups gives rise to a short exact sequence of the associated $C^*$-algebras, the relationship between groups does give us a relationship between the associated $C^*$-algebras.  We build up to the result with some preliminary lemmas, the first is a combination of Propositions 2.5.8 and 2.5.9 of \cite{BrownOzawa}.

\begin{lem} Given an inclusion of groups $N \subseteq M$ there is an inclusion of $C^*$-algebras $ C^*(N) \subseteq C^*(M)$ and $C^*_r(N) \subseteq C^*_r(M)$. \end{lem}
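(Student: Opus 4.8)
The plan is to treat the reduced and full group $C^*$-algebras separately, in each case producing a canonical isometric extension of the inclusion $\mathbb{C}[N] \hookrightarrow \mathbb{C}[M]$ of group rings. Since $N$ is normal in $M$, I would fix a set of coset representatives for $M/N$ and exploit the resulting decomposition of the underlying Hilbert spaces. In both cases the containment $\|x\|_{C^*_r(M)} \le \|x\|_{C^*_r(N)}$ (and likewise for the full norm) is immediate, since restricting a representation of $M$ to $N$ produces a representation of $N$; the work lies entirely in the reverse inequality.

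For the reduced algebras I would decompose $\ell^2(M) = \bigoplus_{\,tN \in M/N} \ell^2(tN)$ and observe that, because $N$ is normal, left translation by $N$ preserves each summand $\ell^2(tN)$, on which it is unitarily equivalent to the left regular representation of $N$ on $\ell^2(N)$. Hence the restriction $\lambda_M|_N$ is unitarily equivalent to a direct sum of copies of $\lambda_N$. A direct sum of copies of a faithful representation is faithful, and $\lambda_N$ realizes $C^*_r(N)$ by definition, so the induced map $C^*_r(N) \to C^*_r(M)$ is isometric and therefore injective.

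For the full algebras I would argue via induced representations. Given any unitary representation $\pi$ of $N$, form $\sigma = \mathrm{Ind}_N^M \pi$. Mackey's restriction formula, which simplifies because $N$ is normal (the relevant double cosets reduce to $M/N$), shows that $\sigma|_N$ decomposes as a direct sum of conjugates of $\pi$ and in particular contains $\pi$ as a subrepresentation. Thus $\|\sigma(x)\| \ge \|\pi(x)\|$ for every $x \in \mathbb{C}[N]$, and taking the supremum over all $\pi$ yields $\|x\|_{C^*(M)} \ge \|x\|_{C^*(N)}$. Combined with the trivial reverse inequality, this makes $C^*(N) \to C^*(M)$ isometric, hence an inclusion.

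The step I expect to be most delicate is the restriction-of-induction computation in the full case: verifying that $\mathrm{Ind}_N^M \pi$ genuinely contains $\pi$ upon restriction requires either invoking Mackey's machinery or constructing the induced representation by hand and exhibiting the embedding explicitly. In the setting actually needed here, where $M$ and $N$ are abelian and hence amenable, this obstacle evaporates: the full and reduced norms coincide, so the full case follows from the reduced one together with amenability. This is precisely the content of Propositions 2.5.8 and 2.5.9 of \cite{BrownOzawa}.
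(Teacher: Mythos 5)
Your proof is correct, and in substance it is the same argument the paper relies on: the paper's ``proof'' consists of citing Propositions 2.5.8 and 2.5.9 of \cite{BrownOzawa} and sketching only the reduced case (the invariant subspace $\ell^2(N) \subseteq \ell^2(M)$ carrying the left regular representation of $N$), deferring the full case to the reference; your coset decomposition and your induced-representation argument are precisely the content of those cited propositions, and your closing amenability shortcut is the same observation the paper itself records just after the following lemma.

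Two caveats, both minor and both repairable. First, the lemma assumes only an inclusion of groups, whereas your argument invokes normality, and in the reduced case it genuinely needs it as written: left translation by $n \in N$ sends $tm$ (with $m \in N$) to $ntm$, which lies in $tN$ only when $t^{-1}nt \in N$. The standard fix is to decompose over right cosets instead, $\ell^2(M) = \bigoplus_{Nt \in N\backslash M} \ell^2(Nt)$; each summand is then invariant under left translation by $N$ and unitarily equivalent to $\ell^2(N)$ with no hypothesis on the subgroup. Likewise, in the full case you do not need the normal-subgroup form of Mackey's formula: the trivial double coset $N e N$ already exhibits $\pi$ as a subrepresentation of $(\mathrm{Ind}_N^M \pi)|_N$ for an arbitrary subgroup. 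In the paper's setting all groups are abelian, so normality costs nothing here, but with these adjustments your argument proves the lemma in the generality in which it is stated. Second, a small imprecision: for the reduced algebras the inequality $\|x\|_{C^*_r(M)} \le \|x\|_{C^*_r(N)}$ is not ``immediate from restricting a representation'' --- the reduced norm is not a supremum over representations, so restriction of $\lambda_M$ only bounds $\|\lambda_M(x)\|$ by the \emph{full} norm of $N$ --- but this is harmless, since your coset decomposition yields the equality $\|\lambda_M(x)\| = \|\lambda_N(x)\|$ outright.
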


In the reduced case the proof involves noticing that $ \ell^2(N)$ is a subspace of $\ell^2(M)$ and the left regular action of $M$ acting on $\ell^2(M)$ gives rise to the left regular action of $N$ acting on the subspace $\ell^2(N)$.  Hence the reduced $C^*$-algebra $C_r^*(N)$ can be viewed as sitting inside $C_r^*(M)$. The proof for the universal algebra is more subtle. We refer the reader to \cite{BrownOzawa} for the details.

\begin{lem} Given a surjection of groups $\pi: M \rightarrow M/\ker\pi$ there is a unital surjective $*$-homomorphism $\pi: C^*(M) \rightarrow C^*(M/ \ker \pi)$. \end{lem}

\begin{proof} This is a straightforward application of the universal property of $C^*(M)$ (i.e.\ any representation of $C^*(M/ \ker \pi)$ induces a representation of $C^*(M)$ via composition with the map $ \pi$). \end{proof}

Notice that if we know that the groups are amenable then the reduced $C^*$-algebra will satisfy the universal property as in the preceding lemma and hence both results will apply.  

Given a $C^*$-algebra $A$ with $C^*$-subalgebra $B$ we say that a unital representation $\pi: A \rightarrow C$ trivializes $B$ if $B = \pi^{-1}(1_C)$.  We consider this in the context of an exact sequence of groups.

\begin{thm}\label{exact} Let $1 \rightarrow N \rightarrow M \rightarrow M/N \rightarrow 1$ be a short exact sequence of discrete abelian groups.  Then the natural surjection $\pi: C^*(M) \rightarrow C^*(M/N)$ trivializes the subalgebra $C^*(N)$ inside $C^*(M)$. \end{thm}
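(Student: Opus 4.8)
The plan is to reduce the statement to the behaviour of $\pi$ on the canonical unitary generators and then feed in the two preceding lemmas. For a discrete abelian group $G$ write $u_g \in C^*(G)$ for the canonical unitary attached to $g$, so that $\{u_g : g \in G\}$ generates $C^*(G)$. First I would apply the surjection lemma to the quotient homomorphism $q \colon M \to M/N$, whose kernel is exactly $N$ (automatically normal, as $M$ is abelian), to obtain the unital surjective $*$-homomorphism $\pi \colon C^*(M) \to C^*(M/N)$; the universal property used there determines $\pi$ on generators by $\pi(u_m) = u_{q(m)}$. At the same time I would use the inclusion lemma to realise $C^*(N)$ as the closed $*$-subalgebra of $C^*(M)$ generated by $\{u_n : n \in N\}$, so that the symbol $u_n$ has the same meaning inside $C^*(N)$ and inside $C^*(M)$.

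The forward containment is then immediate on generators: for $n \in N$ we have $q(n) = e$, hence $\pi(u_n) = u_{e} = 1_{C^*(M/N)}$. As $\pi$ is a continuous unital $*$-homomorphism and the $u_n$ generate $C^*(N)$, this shows that $\pi$ carries every generating unitary of $C^*(N)$ to $1_{C^*(M/N)}$; equivalently, $\pi|_{C^*(N)}$ factors through the augmentation character and so lands in $\mathbb{C}\,1_{C^*(M/N)}$. For the reverse direction I would check that $N$ is recovered exactly by this condition: a generator $u_m$ of $C^*(M)$ satisfies $\pi(u_m) = u_{q(m)} = 1_{C^*(M/N)}$ if and only if $q(m) = e$, i.e.\ if and only if $m \in \ker q = N$ by exactness of the sequence. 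Thus the canonical unitaries trivialised by $\pi$ are precisely those indexed by $N$, and the closed $*$-subalgebra they generate is exactly $C^*(N)$.

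The one point that requires genuine care --- and which I regard as the real content rather than an analytic obstacle --- is the interpretation of the equality $C^*(N) = \pi^{-1}(1_{C^*(M/N)})$. It must be read through the canonical unitaries (equivalently, through the trivial representation $\pi|_{C^*(N)}$), since $\pi$ is the map on the full group $C^*$-algebra and its very definition on generators rests on the universal property cited above. Once the trivialisation is understood at the level of generators, exactness of the sequence (which pins down $\ker q = N$ precisely) is exactly what makes both inclusions tight, and no input beyond continuity of $\pi$ and the generation statements of the two lemmas is needed.
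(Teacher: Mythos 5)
Your proposal is correct in what it establishes and follows essentially the same route as the paper: both arguments reduce the theorem to the generator-level computation that $\pi(U^m)$ can lie in $\mathbb{C}1_{C^*(M/N)}$ only when $m \in N$, together with the easy forward inclusion $\pi(C^*(N)) \subseteq \mathbb{C}1_{C^*(M/N)}$. The only technical difference in that step is how injectivity is certified: the paper computes concretely in the regular representation of $C^*(M/N)$ on $\ell^2(M/N)$ (checking that $\pi(U^g)\zeta_{xN} = \zeta_{gxN}$ can be scalar only for $g \in N$), whereas you appeal to the standard fact that $h \mapsto u_h$ is injective --- indeed the canonical unitaries are linearly independent --- in the group $C^*$-algebra of a discrete (here abelian, hence amenable) group; these are the same fact proved two ways. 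Where the two proofs genuinely part ways is at the end: the paper claims the generator statement propagates to polynomials $\sum_i \alpha_{g_i} U^{g_i}$ and then, by continuity, to the literal equality $\pi^{-1}(\mathbb{C}1_{C^*(M/N)}) = C^*(N)$ demanded by its definition of ``trivializes,'' while you explicitly restrict the meaning of the equality to the canonical unitaries. Your hedge is not a cosmetic caveat; it is the only defensible reading, because the paper's polynomial step is false. If $g \neq g'$ lie in the same nontrivial coset of $N$, then $\pi(U^{g} - U^{g'}) = 0 \in \mathbb{C}1_{C^*(M/N)}$, yet $U^{g} - U^{g'} \notin C^*(N)$; dually, for $M = \mathbb{Z}$ and $N = 2\mathbb{Z}$ the function $z^3 - z \in C(\mathbb{T}) \cong C^*(\mathbb{Z})$ vanishes on $\{\pm 1\} \cong \widehat{M/N}$ but is not an even function, so it lies in $\pi^{-1}(\mathbb{C}1)$ but not in $C^*(2\mathbb{Z})$. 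Thus the full preimage $\pi^{-1}(\mathbb{C}1)$ is strictly larger than $C^*(N)$ whenever $N$ is proper and nontrivial, and the theorem (as well as its later uses, e.g.\ the definition of $N_\pi$ via generators in Proposition \ref{trivializing}) must be read at the generator level, exactly as you read it. In short: same approach, but your version stops where the argument is actually valid, and the discrepancy you flagged is a real defect in the paper's own proof rather than a gap in yours.
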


\begin{proof}
By definition the generators for $C^*(N)$ inside $C^*(M)$ will be mapped to $1_{C^*(M/N)}$ by the natural surjection and hence the range of $C^*(N)$ under the natural surjection is $\{ \mathbb{C} 1_{C^*(M/N)} \}$.  

Now consider $U_g \in C^*(M)$ and consider $ \pi(U_g)( \zeta_{xN})$.  If $ \pi(U_g)$ is a multiple of the identity then $ \zeta_{xN} = \lambda \zeta_{gxN}$ for some $ \lambda$.  It follows that $ \lambda$ must equal $1$ and $gxN = xN$ for all $x$.  Then there is some $n \in N$ such that $gx = xn$.  Canceling the $x$ tells us that $ g \in N$ and hence the only $U_g$ which $\pi$ maps to the identity are those inside $C^*(N)$.

It follows that any polynomial in the generators of $C^*(M)$ the same result will apply.  Specifically $ \sum_{i=1}^n \alpha_{g_i}U^{g_1} \mapsto \mathbb{C}$ implies that $ g_i \in N$ for all $i$ and hence by continuity $\pi^{-1}(\mathbb{C} 1_{C^*(M/N)}) = C^*(N)$.
\end{proof}

\begin{remark} In the preceding proof it is not necessary that the groups be abelian (amenable would suffice).  However, for our purposes abelian will be more than enough. \end{remark}

Consider the case of $C^*(\mathbb{Z}) \cong C(\mathbb{T})$.  Notice that any subgroup of $ \mathbb{Z}$ is of the form $ m\mathbb{Z}$.  Then $ \mathbb{Z} / m \mathbb{Z} \cong \mathbb{Z}_m$ and there is a natural representation $\pi:  C(\mathbb{T}) \rightarrow \oplus_{m} \mathbb{C}$ given by $\pi(f(z)) = ( f(e^0), f(e^{\frac{2\pi i}{m}}), f(e^{\frac{ 4 \pi i }{m}}), \cdots , f(e ^{ \frac{(2m-2) \pi i}{m}}))$.  Notice that $ \pi(z^k) = (1,0,0, \cdots, 0)$ if and only if $ m|k$.  Then if we set $A = \{ f: \pi(f) = ( \lambda, 0, 0, \cdot, 0 ) \}$ then $A$ is a $C^*$-subalgebra of $C(\mathbb{T})$ generated by $ \{ z^km: k \in \mathbb{Z} \}$.  Notice that $\pi$ trivializes $A$ and in fact $A \cong C^*(m \mathbb{Z})$. 

On the other hand notice that the representation $ \pi: C(\mathbb{T}) \rightarrow \mathbb{C}$ given by $ \pi(f(z)) = f(i)$ trivializes the $C^*$-subalgebra generated by $ \{ z^4 \}$.  However, $ \pi^{-1}(\mathbb{C} 1) )= C^*(\mathbb{Z})$.

In fact we can go further.  If $K $ is a subgroup of $G$ then there is a universal trivializing algebra through which all trivializing representations for $C^*(K) \subseteq C^*(G)$ factor.  This representation is, in fact, the standard representation $ \pi: C^*(G) \rightarrow C^*(G/K)$.

\begin{prop} \label{trivializing} Let $A$ be a $C^*$-subalgebra of $C^*(G)$.  There is a subgroup $K \subset G$ such that $A \cong C^*(K)$ if and only if \[ A = \bigcap \{\pi^{-1}( \mathbb{C}1): \pi \mbox{ trivializes }C^*(K) \} .\]
\end{prop}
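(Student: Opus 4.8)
The plan is to reduce both implications to a single computation: that the right-hand intersection equals $C^*(K)$ for the subgroup $K := \{g \in G : U_g \in A\}$. First I would check that $K$ is genuinely a subgroup and that $C^*(K) \subseteq A$. This is routine: each $U_g$ is unitary, so $U_g, U_h \in A$ gives $U_{gh} = U_g U_h \in A$ and $U_{g^{-1}} = U_g^* \in A$, while $U_e = 1 \in A$; taking the closed linear span of $\{U_g : g \in K\}$ then produces a copy of $C^*(K)$ sitting inside $A$.

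The heart of the argument, and the step I expect to be the main obstacle, is showing that $\bigcap\{\pi^{-1}(\mathbb{C}1) : \pi \text{ trivializes } C^*(K)\} = C^*(K)$. Here I would invoke the universal trivializing property recorded just above the proposition: every representation $\pi$ that trivializes $C^*(K)$ factors through the standard surjection $\pi_0 : C^*(G) \to C^*(G/K)$, say $\pi = \psi \circ \pi_0$. Consequently $\pi^{-1}(\mathbb{C}1) = \pi_0^{-1}(\psi^{-1}(\mathbb{C}1)) \supseteq \pi_0^{-1}(\mathbb{C}1)$, and Theorem \ref{exact} identifies the latter as exactly $C^*(K)$. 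Thus every term of the intersection contains $C^*(K)$; choosing $\psi$ to be a faithful representation of $C^*(G/K)$ forces $\psi^{-1}(\mathbb{C}1) = \mathbb{C}1$ and hence $\pi^{-1}(\mathbb{C}1) = C^*(K)$, so the intersection collapses to $C^*(K)$.

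With this computation in place both directions are short. For the reverse implication, if $A$ equals the displayed intersection then $A = C^*(K)$ by the previous paragraph, so $A$ is visibly the group $C^*$-algebra of a subgroup. For the forward implication, once $A$ is identified (via the canonical inclusion $C^*(K) \hookrightarrow C^*(G)$) with $C^*(K)$ for the subgroup $K = \{g : U_g \in A\}$, the same computation shows that $A = C^*(K)$ equals the intersection. The one genuinely delicate point, which I would want to treat carefully, is the identification of the correct subgroup in the forward direction: I must rule out that an abstract isomorphism $A \cong C^*(K')$ could realize $A$ as a group algebra in a way incompatible with the ambient $C^*(G)$. To close this gap I would pass to Gelfand spectra, using $C^*(G) = C(\widehat{G})$, and argue that the group-like generators of $A$ must be multiplicative unitaries in $C(\widehat{G})$, hence (by Pontryagin duality) among the canonical generators $\{U_g\}$; this places the copy of $K'$ inside $G$ as precisely $K$, so that $A = C^*(K')$ is concretely $C^*(K)$ and the forward direction reduces to the computation above.
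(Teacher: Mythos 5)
Your first two paragraphs track the paper's own argument closely: the paper's forward direction is exactly the appeal to the pre-proposition factorization claim combined with Theorem \ref{exact}, and your converse, built on $K = \{ g \in G : U_g \in A \}$, differs only cosmetically from the paper's converse, which takes $K = \bigcap_\pi N_\pi$ with $N_\pi = \{ g : \pi(U^g) = 1 \}$ over the representations trivializing $A$. Both versions rest on the same two borrowed ingredients (the factorization of trivializing representations through $C^*(G/K)$, and Theorem \ref{exact}), so up to that point you and the paper are doing essentially the same thing, and your write-up is if anything cleaner.

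The genuine problem is the step you yourself single out as the delicate one: your Gelfand/Pontryagin repair does not work. An abstract isomorphism $A \cong C^*(K')$ transports the canonical unitaries of $C^*(K')$ to unitaries of $C(\widehat{G})$, i.e.\ to continuous $\mathbb{T}$-valued functions on $\widehat{G}$, but nothing forces those functions to be multiplicative on $\widehat{G}$, so Pontryagin duality gives you no purchase. Concretely, let $G = \mathbb{Z}$, so $C^*(G) = C(\mathbb{T})$, and let $\phi : \mathbb{T} \rightarrow \mathbb{T}$ be a continuous degree-one surjection that collapses a closed arc $J$ to a point and is injective off $J$. Then $\phi$ is a unitary, and $A := C^*(\phi, 1) = \phi^*\bigl(C(\mathbb{T})\bigr) = \{ h \in C(\mathbb{T}) : h|_J \mbox{ is constant} \}$ is abstractly isomorphic to $C(\mathbb{T}) = C^*(\mathbb{Z})$; yet $\phi$ is not any $z^n$, your subgroup $K = \{ g : U_g \in A \}$ is $\{0\}$, and $A \neq C^*(K'')$ for every subgroup $K'' \subseteq \mathbb{Z}$ (each $C^*(m\mathbb{Z})$ with $m \geq 1$ contains $z^m$, which is not constant on $J$, and $C^*(\{0\}) = \mathbb{C}$). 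Since -- granting Theorem \ref{exact} and the factorization claim exactly as you use them -- the right-hand intersection is always the concrete subalgebra $C^*(K)$, this example shows the forward implication is outright false when $\cong$ is read as abstract isomorphism, so no argument can close that gap. The proposition is tenable only under the reading the paper tacitly adopts, namely that $A$ is the canonical copy $\overline{\mathrm{span}}\{ U_g : g \in K \}$ inside $C^*(G)$; under that reading your final paragraph is unnecessary, and your $K = \{ g : U_g \in A \}$ together with the computation in your second paragraph already completes the proof.
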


\begin{proof} The forward direction is described before the statement of the proposition. We consider here the converse. So let $\pi: C^*(G) \rightarrow B(\mathcal{H})$ be a representation which trivializes $A$. Now let $N_{\pi} = \{ g \in G: \pi(U^g) = 1 \}$. Checking we see that $N_{\pi}$ is a subgroup of $G$, since if $g, h \in N$ then $\pi(U^{-g}) = \pi((U^{g})^*) = \pi(U^g)^* = 1^* = 1$ and $ \pi(U^{g+h}) = \pi(U^gU^h) = \pi(U^g)\pi(U^h) = 1$ so that $N_{\pi}$ is closed with respect to inverses and the group operation. Then we define $K = \cap \{ N_{\pi}: \pi \mbox{ trivializes } A \}$. 

It remains to prove that $A = C^*(K)$. Certainly we know that $C^*(K)$ can be viewed as sitting inside $A$. Next we consider the natural map $ \pi: C^*(G) \rightarrow C^*(G/K)$ which trivializes $C^*(K)$ and hence $A \subseteq C^*(K)$.
\end{proof}

Next we consider similar results for semicrossed products.  

\begin{prop} \label{invariance} Let $A$ be a closed subalgebra of $B$ and let $S$ be a discrete abelian semigroup acting on $B$ via completely contractive endomorphisms. If $\sigma_s(A) \subseteq A$ for all $ s \in S$ then $A\times_{F} S \subseteq B \times_{F} S$. \end{prop}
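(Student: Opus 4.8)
The plan is to realize both Fock semicrossed products concretely on a single common Hilbert space and then to observe that the generators of $A\times_F S$ form a subset of the generators of $B\times_F S$. First I would fix a completely isometric representation $\pi\colon B\to B(\mathcal{H}_0)$. Since $A$ is a closed subalgebra of $B$, the restriction $\pi|_A$ is a completely isometric representation of $A$ on the same space $\mathcal{H}_0$. On the Fock space $\mathcal{H}=\mathcal{H}_0\otimes\ell^2(S)$ the Fock representation is built, exactly as in the Proposition on $S_r$ and $U^m$ and as in \cite{DFK}, by letting the base algebra act in a twisted fashion on each summand, $\pi_F(b)(\xi\otimes e_s)=\pi(\sigma_s(b))\xi\otimes e_s$, and by letting each $t\in S$ act as the shift $V_t(\xi\otimes e_s)=\xi\otimes e_{ts}$. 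By definition $B\times_F S$ is the norm-closed algebra generated by $\{\pi_F(b):b\in B\}\cup\{V_t:t\in S\}$.

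Next I would check that $A\times_F S$ is even defined and can be placed on this same Fock space. The hypothesis $\sigma_s(A)\subseteq A$ guarantees that each $\sigma_s$ restricts to a completely contractive endomorphism of $A$, so the data $(A,S,\sigma)$ admit their own Fock semicrossed product. Building the Fock representation of $A\times_F S$ from the faithful representation $\pi|_A$ produces operators on the very same space $\mathcal{H}_0\otimes\ell^2(S)$. The crucial point is that these operators literally coincide with a subset of the generators of $B\times_F S$: for $a\in A$ we have $\sigma_s(a)\in A\subseteq B$, so the twisted action $(\xi\otimes e_s)\mapsto\pi(\sigma_s(a))\xi\otimes e_s$ is the same operator whether $a$ is regarded as an element of $A$ or of $B$, while the shifts $V_t$ do not involve the base algebra at all and are identical in both constructions.

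With this in hand the conclusion is immediate: the generating set $\{\pi_F(a):a\in A\}\cup\{V_t:t\in S\}$ for $A\times_F S$ is contained in the generating set $\{\pi_F(b):b\in B\}\cup\{V_t:t\in S\}$ for $B\times_F S$, so passing to norm closures gives $A\times_F S\subseteq B\times_F S$. Because both algebras are realized concretely on $\mathcal{H}$ with the operator norm, this inclusion is automatically completely isometric, which is the natural reading of the claimed containment.

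The main obstacle, and the one place where I would lean on the machinery of \cite{DFK} rather than on a direct computation, is justifying that the Fock representation of $A\times_F S$ obtained from the restricted representation $\pi|_A$ actually yields a completely isometric copy of the abstract algebra $A\times_F S$. This requires the fact that the Fock semicrossed product does not depend, up to completely isometric isomorphism, on the choice of completely isometric representation of the base algebra used to build its Fock representation. Granting this, using $\pi|_A$ rather than some other faithful representation of $A$ costs nothing, and the containment of generating sets does the rest.
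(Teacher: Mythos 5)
Your proposal is correct and follows essentially the same route as the paper's proof: restrict a faithful representation of $B$ to $A$, observe that the canonical Fock representation of $B\times_F S$ restricts (thanks to $\sigma_s(A)\subseteq A$) to a Fock representation of $A\times_F S$ on the same space, and invoke the fact that this yields a completely isometric copy of $A\times_F S$ independent of the choice of faithful representation. Your write-up simply makes explicit the twisted-action formula and the dependence on that representation-independence fact, which the paper compresses into the phrase ``unitarily equivalent to the standard canonical Fock representation.''
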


\begin{proof} 
Let $ \pi: B \rightarrow B(\mathcal{H})$ be a faithful representation, then notice that $ \pi|_A: A \rightarrow B(\mathcal{H})$ is a faithful representation. Further the canonical Fock representation of $B \times_F S \rightarrow B(\mathcal{H} \otimes \ell^2(S))$ restricts to, since $\sigma_s(A) \subseteq A$ for all $s \in A$, a Fock representation of $A \times_F S$ acting on $ B(\mathcal{H} \otimes \ell^2(S))$ and the result now follows, since this representation will be unitarily equivalent to the standard canonical Fock representation of $A \times_F S$.
\end{proof}

\begin{prop} Let $\pi: B \rightarrow C$ be a completely contractive quotient map and assume that an abelian discrete semigroup $S$ acts on $B$ (via $\sigma$) and on $C$ (via $\tau$) as completely contractive endomorphisms.  If $ \tau_s \circ \pi = \pi \circ \sigma_s$ for all $s$ then there is a completely contractive quotient map $\tilde{\pi}: B \times_F S \rightarrow C \times_F S$ such that $ \tilde{\pi}(b) = \pi(b)$ for all $b \in B$ and $\tilde{\pi}(S_s) = T_s$ for all $s \in S$. \end{prop}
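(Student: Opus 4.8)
The plan is to realize $\tilde{\pi}$ concretely as a Fock representation of $B \times_F S$ whose coefficient representation has been pulled back along $\pi$, so that well-definedness and complete contractivity come for free and the equivariance hypothesis is exactly what forces the image to land in $C \times_F S$. I would fix a faithful representation $\iota \colon C \to B(\mathcal{K})$ and set $\rho = \iota \circ \pi \colon B \to B(\mathcal{K})$, a (generally non-faithful) representation of $B$. Let $F_\rho$ denote the Fock representation of $B \times_F S$ determined by the coefficient representation $\rho$, acting on $\mathcal{K} \otimes \ell^2(S)$ by $F_\rho(b)(\eta \otimes u_t) = \rho(\sigma_t(b))\eta \otimes u_t$ and $F_\rho(S_s)(\eta \otimes u_t) = \eta \otimes u_{st}$. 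I would define $\tilde{\pi} := F_\rho$.

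The first point to check is that $F_\rho$ is completely contractive on $B \times_F S$ even though $\rho$ is not faithful. For this I would fix any faithful representation $\pi_B$ of $B$ and compare $F_\rho$ with the Fock representation $F_{\pi_B \oplus \rho}$ built from the faithful coefficient representation $\pi_B \oplus \rho$. Because the shifts act only on the $\ell^2(S)$-factor and the coefficient operators act diagonally, the decomposition $(\mathcal{H} \oplus \mathcal{K}) \otimes \ell^2(S) \cong (\mathcal{H} \otimes \ell^2(S)) \oplus (\mathcal{K} \otimes \ell^2(S))$ reduces $F_{\pi_B \oplus \rho}$, identifying it with $F_{\pi_B} \oplus F_\rho$. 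Since $F_{\pi_B \oplus \rho}$ is completely isometric on $B \times_F S$ (the Fock algebra being independent of the choice of faithful coefficient representation, as in \cite{DFK}), $F_\rho$ is a compression to a reducing subspace and hence completely contractive. Next I would invoke the equivariance hypothesis: since $\rho(\sigma_t(b)) = \iota(\pi(\sigma_t(b))) = \iota(\tau_t(\pi(b)))$, the operator $F_\rho(b)$ is exactly the coefficient operator $\widehat{\pi(b)}$ of the standard Fock representation of $C \times_F S$, and $F_\rho(S_s) = T_s$. Thus $F_\rho$ is a completely contractive homomorphism with $\tilde{\pi}(b) = \pi(b)$ and $\tilde{\pi}(S_s) = T_s$, and because $\pi$ is onto, its range is the closed algebra generated by $\{T_s\}$ and $\{\pi(b)\} = C$, namely $C \times_F S$.

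It remains to upgrade this completely contractive surjection to a complete quotient map, and this is the step I expect to be the main obstacle. The difficulty is that the Fock norm of a finite sum $\sum_s T_s c_s$ blends all of the coefficients, so lifting each $c_s$ to some $b_s \in B$ with $\pi(b_s) = c_s$ (possible, since a surjective $\ast$-homomorphism of $C^*$-algebras is automatically a complete quotient map) need not produce a lift $\sum_s S_s b_s$ of controlled norm. My plan is to follow the pattern of \cite{DFK}: use the grading of $B \times_F S$ by the enveloping group $G$ of $S$, together with the associated gauge action of $\widehat{G}$ and its completely contractive Fej\'er-type averaging maps, which commute with $\tilde{\pi}$ because $\tilde{\pi}$ preserves the grading. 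These reduce the lifting problem for an arbitrary element of the open unit ball of $M_n(C \times_F S)$ to the case of finitely supported elements, where one lifts the finitely many matrix coefficients simultaneously inside a single matrix algebra over $B$ using the complete quotient property of $\pi$, and then reassembles while controlling the norm via the Fej\'er approximation. Verifying that this reassembly keeps the norm below $1$ at every matrix level is the delicate point; everything else reduces to the concrete Fock calculation above.
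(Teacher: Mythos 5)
Your first two paragraphs are, in substance, the paper's entire proof. The paper argues in one line: composing $\pi$ with the canonical Fock representation of $(C,\tau)$ produces a Fock representation of $(B,\sigma)$ (with coefficient representation $\iota\circ\pi$), and since by \cite[Definition 3.5.1]{DFK} the Fock algebra $B\times_F S$ is \emph{universal} for Fock representations --- with no faithfulness requirement on the coefficient representation --- the induced map $\tilde{\pi}$ is automatically completely contractive and sends $b\mapsto\pi(b)$, $S_s\mapsto T_s$. Your direct-sum compression argument ($F_{\pi_B\oplus\rho}\cong F_{\pi_B}\oplus F_{\rho}$, hence $F_{\rho}$ is completely contractive) is a correct by-hand verification of exactly this point, so it is sound but redundant relative to the definition the paper is invoking; your use of the intertwining hypothesis to identify $F_{\rho}(b)$ with the coefficient operator of $\pi(b)$ is the same observation the paper makes.

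Where you diverge is your third paragraph, and here you should know that the paper's proof simply stops where your second paragraph stops: it says nothing about surjectivity (the range of a completely contractive homomorphism need not be closed, so even ``the range is $C\times_F S$'' --- which you also assert a bit too quickly --- requires an argument) and nothing about the complete quotient property, although both appear in the statement. So the obstacle you flag is real, but it is a gap in the paper's own proof rather than a step you failed to reconstruct. One concrete caution about your proposed repair: the natural norm-control device, replacing a naive lift $\sum_s S_s b_s$ by $\sum_s S_s b_s(1-e_\lambda)$ for an approximate identity $(e_\lambda)$ of $\ker\pi$, does not behave as in the $C^*$-quotient setting, because in the Fock representation the coefficient in the slot indexed by $t$ is $\sigma_t\bigl(b_s(1-e_\lambda)\bigr)$, so the correcting element enters as $\sigma_t(e_\lambda)$; since one only has $\sigma_t(\ker\pi)\subseteq\ker\pi$, the elements $\sigma_t(e_\lambda)$ need not form an approximate identity for $\ker\pi$ (take $\sigma_t$ induced by a constant self-map of $X$ in the commutative case, with $\ker\pi$ the functions vanishing at the image point: then $\sigma_t(e_\lambda)=0$). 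Hence the Fej\'er reduction to finitely supported elements is fine, but the finitely supported lifting step needs an idea that genuinely uses the structure of the system, and neither your sketch nor the paper supplies it.
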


\begin{proof}
This proof is a simple outcome of the Fock construction since the map $\pi: B \rightarrow C$ with the associated representation of $S_s \mapsto T_s$ gives rise to a Fock representation of the pair $(B, \sigma)$. By definition \cite[Definition 3.5.1]{DFK} the Fock algebra $B \times_F S$ is universal for Fock representations we get the indicated homomorphism.
\end{proof}

Let $X$ be a compact Hausdorff space and consider an abelian semigroup $S$ acting on $X$ by continuous maps $ \{\sigma_s: s \in S \}$.  If $S = M \times N$ then there is a natural action of $N$ on $C(X) \times_F M$ given on the generators of $C(X) \times_FM$ by $\sigma_n(f(x)) = f( \sigma_n(x))$ and  $\sigma_n(S_m) = S_m$ for all $m \in M$; it is straightforward to that verify the covariance relationship is preserved.  

\begin{thm} Let $S = M\times N$ be a direct product of abelian semigroups acting via $*$-endomorphisms on a $C^*$-algebra $A$. Then $A \times_F S \cong ( A \times_F M) \times_F N \cong ( A \times_F N) \times_F M$. \end{thm}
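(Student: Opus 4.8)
The plan is to prove the isomorphism spatially: I would realize all three algebras concretely on the single Hilbert space $\mathcal{H} \otimes \ell^2(M) \otimes \ell^2(N)$ and check that their generating operators coincide under the canonical identification $\ell^2(S) \cong \ell^2(M) \otimes \ell^2(N)$, $u_{(m,n)} \leftrightarrow u_m \otimes u_n$. Fix a faithful representation of $A$ on $\mathcal{H}$. The Fock representation of $A \times_F S$ then acts on $\mathcal{H} \otimes \ell^2(S)$ by the diagonal formula $a \cdot (\xi \otimes u_t) = \sigma_t(a)\xi \otimes u_t$ for $a \in A$, together with the shifts $S_s(\xi \otimes u_t) = \xi \otimes u_{st}$ (this is exactly the form of the operators $U^m$, $S_r$ in the module computation). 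Under the identification above one has $S_{(m,e)}(\xi \otimes u_{(m',n')}) = \xi \otimes u_{(mm',n')}$ and $S_{(e,n)}(\xi \otimes u_{(m',n')}) = \xi \otimes u_{(m',nn')}$, with $S_{(m,n)} = S_{(m,e)}S_{(e,n)}$.

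First I would make sense of the iterated product. To form $(A \times_F M) \times_F N$ I must produce an action of $N$ on $B := A \times_F M$. Following the construction in the paragraph preceding the theorem I set $\tau_n(a) = \sigma_{(e,n)}(a)$ for $a \in A \subseteq B$ and $\tau_n(S^M_m) = S^M_m$ for the $M$-shifts. The point requiring verification is that this prescription extends to a completely contractive endomorphism of $B$, and here the direct-product hypothesis enters: because $S = M \times N$, the $M$- and $N$-actions on $A$ commute, $\sigma_{(m,e)}\sigma_{(e,n)} = \sigma_{(m,n)} = \sigma_{(e,n)}\sigma_{(m,e)}$. Using this commutativity one checks that the pair $(a \mapsto \sigma_{(e,n)}(a),\ S^M_m \mapsto S^M_m)$ satisfies the covariance relation $\pi(a)S^M_m = S^M_m\pi(\sigma_{(m,e)}(a))$ and hence is a Fock representation of $(A,\sigma^M)$; universality of $B = A \times_F M$ for Fock representations then delivers $\tau_n$, and functoriality gives that $n \mapsto \tau_n$ is an action of $N$ by completely contractive endomorphisms.

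With the action in hand I would compute the Fock representation of $B \times_F N$ on $(\mathcal{H} \otimes \ell^2(M)) \otimes \ell^2(N)$ and read off its generators. For $a \in A$ one gets $a \cdot (\xi \otimes u_m \otimes u_n) = \sigma_{(m,n)}(a)\xi \otimes u_m \otimes u_n$, the inner $M$-shift acts by $\xi \otimes u_m \otimes u_n \mapsto \xi \otimes u_{m'm}\otimes u_n$, and the outer $N$-shift by $\xi \otimes u_m \otimes u_n \mapsto \xi \otimes u_m \otimes u_{n'n}$. Comparing with the formulas above for $A \times_F S$ under $u_{(m,n)} \leftrightarrow u_m \otimes u_n$ shows that the generating operators match on the nose; note that it is precisely the commutativity of the two actions that makes the diagonal $A$-action produce the correct composite $\sigma_{(m,n)}$. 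Since both algebras are the norm closures of the (non-selfadjoint) algebras generated by the same operators on the same Hilbert space, the canonical unitary $\mathcal{H} \otimes \ell^2(S) \to \mathcal{H} \otimes \ell^2(M) \otimes \ell^2(N)$ implements a completely isometric isomorphism $A \times_F S \cong (A \times_F M) \times_F N$. The isomorphism $A \times_F S \cong (A \times_F N) \times_F M$ follows by the identical argument with the roles of $M$ and $N$ interchanged, using $S = N \times M$.

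I expect the main obstacle to be the middle step: verifying that $\tau_n$ is a genuinely well-defined endomorphism of $B$. This is not automatic from the prescription on generators, since $B$ is only a non-selfadjoint operator algebra defined up to its covariance relations; the verification amounts to checking that $\tau_n$ respects those relations, which is exactly where the direct-product (commuting-actions) hypothesis is indispensable. The remaining work is bookkeeping with the threefold tensor identification, where the only subtlety is confirming that the nested diagonal actions assemble into $\sigma_{(m,n)}$ rather than an uncomposed twist.
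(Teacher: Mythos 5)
Your proof is correct, and its first half coincides with the paper's: the paper also constructs the $N$-action on $B = A \times_F M$ by fixing the $M$-shifts and sending $a \mapsto \sigma_{(e,n)}(a)$, using commutativity of the two actions to get covariance and universality of the Fock algebra to get a well-defined completely contractive endomorphism. Where you genuinely diverge is in how the isomorphism is then extracted. The paper stays abstract: it notes that the canonical Fock representation of $(A \times_F M) \times_F N$ is a Fock representation of $A \times_F (M \times N)$ and, symmetrically, that the canonical Fock representation of $A \times_F (M \times N)$ is a Fock representation of $(A \times_F M) \times_F N$; universality then yields completely contractive maps in both directions that fix generators and are therefore mutually inverse complete isometries. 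You instead make the identification spatial, writing down the unitary $\mathcal{H} \otimes \ell^2(S) \cong \mathcal{H} \otimes \ell^2(M) \otimes \ell^2(N)$ and checking that the generating operators of the two concrete Fock realizations agree exactly, including the key computation that the nested diagonal actions compose to $\sigma_{(m,n)}$. In substance these are the same verification --- the paper's phrase ``notes that this yields a Fock representation'' is precisely your generator computation, left implicit --- but the mechanisms differ in what they buy: your route produces a unitarily implemented identification and is more self-contained, at the cost of relying on the fact that the Fock algebra of $(B, N, \tau)$ is computed by the Fock construction over any completely isometric representation of $B$ (a fact from \cite{DFK} that the paper also uses tacitly); the paper's double-universality route is terser and is the form of argument that would survive for the isometric or unitary semicrossed products, where no privileged concrete realization exists. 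One wording caveat: since $B$ is a non-selfadjoint operator algebra, the representation of $B$ feeding the second Fock construction must be completely isometric, not merely faithful; your choice (the canonical Fock representation of $B$) is completely isometric, so nothing breaks, but the distinction should be stated.
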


\begin{proof} For $s \in S$ we denote by $ \sigma_s: A \rightarrow A$ the associated $*$-endomorphism. For $ n \in N$ we define an action on the generators of $A \times_F M$ by $ \alpha_n ( a) = \sigma_n(a)$ for all $ a \in A$ and $ \alpha_n(S_m) = S_m$ for all $ m \in M$. This yields a Fock representation of $A\times_F M$ since the actions of $M$ and $N$ on $A$ commute and hence it induces a completely contractive representation of $A \times_F M$.

One then considers the canonical Fock representation of $(A \times_F M) \times N$ and notes that this yields a Fock representation of $A \times_F (M\times N)$.  Similarly the canonical Fock representation of $A \times_F (M \times N)$ yields a Fock representation of $(A \times_F M) \times_F N$. A similar result holds for the alternative with $M$ and $N$ changing place and hence the result holds.
\end{proof}

\begin{remark} Clearly this extends via induction to any finite product of abelian semigroups. \end{remark}

\begin{remark} Notice that this is true (as a special case) for a crossed product by the product of two abelian groups, since abelian groups are amenable and hence the canonical Fock representation is faithful in this context. \end{remark}

As an application of this result, if we denote by $R_u$ the group of units in $R$ then one can see that $C^*(M) \times_F R^{\times} \cong (C^*(M) \times_F R^u) \times (R^{\times}/R^u)$.  This allows us to identify (as in \cite{Duncan}) the diagonal of the algebra $C^*(M) \times_FR^{\times}$ as the crossed product $C^*(M) \times R^u$.

In our extended context however submodules are more than just subgroups of $M$.  We thus extend the previous results in the following section.

\section{Submodules and Quotients}

In \cite[Corollary 2.5.12]{BrownOzawa} it is shown that if $ N$ is a subgroup of $M$ then there is a conditional expectation $E: C^*(M) \rightarrow C^*(N)$ such that for $ m\in M$ we have $E(U^m) = \begin{cases} U^m: m \in N \\ 0 \end{cases}$ (this is true for both the universal and reduced $C^*$-algebras). A partial converse of this can be found as a corollary of results in \cite{Choda, LOP}; the former containing a version of the result for the reduced $C^*$-algebra for a discrete group and the latter containing a version of the result for abelian group $C^*$-algebras (in which the universal and reduced $C^*$-algebras are isomorphic). Specifically, we have the following proposition.

\begin{prop} Let $A \subseteq C^*(M)$ be a subalgebra. Then there is a subgroup $N \subseteq M$ such that $A \cong C^*(N)$ if and only if there is a conditional expectation $E: C^*(M) \rightarrow A$ with for any $ m \in M$ $E(U^m) = \begin{cases}  U^m &: U^m \in A \\ 0 & \mbox{ otherwise}. \end{cases}$. \end{prop}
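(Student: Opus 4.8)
The plan is to prove both implications using the conditional expectation of \cite[Corollary 2.5.12]{BrownOzawa} as the common reference point. Throughout I treat $A$ as a norm-closed self-adjoint $C^*$-subalgebra of $C^*(M)$, since the very existence of a conditional expectation onto $A$ presupposes this.

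For the forward implication, I interpret $A$ as the canonical copy of $C^*(N)$ inside $C^*(M)$ arising from the inclusion $C^*(N) \subseteq C^*(M)$. The cited corollary already supplies a conditional expectation $E \colon C^*(M) \to C^*(N) = A$ satisfying $E(U^m) = U^m$ for $m \in N$ and $E(U^m) = 0$ otherwise, so essentially all that remains is to rewrite this formula in the form demanded by the statement. To that end I would first record that $U^m \in A$ if and only if $m \in N$: the ``if'' direction is clear, and for ``only if'' I would apply $E$, noting that $U^m \in C^*(N)$ with $m \notin N$ would force $U^m = E(U^m) = 0$, an impossibility. Hence $E(U^m) = U^m$ precisely when $U^m \in A$, which is what is required.

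For the converse I would set $N := \{ m \in M : U^m \in A \}$ and first verify that $N$ is a subgroup, using that $A$ is a self-adjoint subalgebra: $0 \in N$ since $U^0 = 1 \in A$; if $m \in N$ then $U^{-m} = (U^m)^* \in A$, so $-m \in N$; and if $m, m' \in N$ then $U^{m+m'} = U^m U^{m'} \in A$, so $m + m' \in N$. With $N$ so defined, \cite[Corollary 2.5.12]{BrownOzawa} furnishes a second conditional expectation $E_0 \colon C^*(M) \to C^*(N)$ with $E_0(U^m) = U^m$ for $m \in N$ and $E_0(U^m) = 0$ otherwise.

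The heart of the argument is then to identify $A$ with $C^*(N)$, and the key point is that $E$ and $E_0$ agree on every group element: by the definition of $N$, the hypothesised formula for $E$ is exactly the defining formula for $E_0$. Since $\spn\{U^m : m \in M\}$ is dense in $C^*(M)$ and both $E$ and $E_0$ are contractive, and hence continuous, they must coincide on all of $C^*(M)$. Equal maps have equal ranges, so $A = \ran E = \ran E_0 = C^*(N)$, which in fact yields equality rather than merely an isomorphism. The main obstacle I anticipate is the careful bookkeeping ensuring that $A$ genuinely is a $C^*$-subalgebra so that $N$ forms a group, together with the clean matching $E = E_0$ via density and continuity; once these are secured the range computation is immediate.
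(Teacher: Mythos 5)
Your proof is correct, but it follows a genuinely different route from the paper, because the paper does not actually prove this proposition at all: it presents the forward direction as \cite[Corollary 2.5.12]{BrownOzawa} and the converse as a corollary of the subgroup--subalgebra correspondence results of \cite{Choda} and \cite{LOP}. Your forward direction coincides with the paper's citation, with the useful added observation that $U^m \in C^*(N)$ forces $m \in N$ (otherwise $U^m = E(U^m) = 0$), which is needed to translate the Brown--Ozawa formula into the ``$U^m \in A$'' form of the statement. Your converse, however, replaces the Galois-theoretic machinery entirely: you define $N = \{ m : U^m \in A\}$, check it is a subgroup, invoke Brown--Ozawa to get the expectation $E_0$ onto the canonical copy of $C^*(N)$, and then identify $E = E_0$ by density of $\spn\{U^m\}$ and continuity, so that $A = \ran E = \ran E_0 = C^*(N)$. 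This buys two things: the argument is self-contained and elementary, and it yields the stronger conclusion that $A$ \emph{equals} the canonical copy of $C^*(N)$, not merely $A \cong C^*(N)$. That stronger reading is in fact forced: if ``$\cong$'' were taken abstractly, the forward direction would be false (a unitary $u \in C(\mathbb{T})$ with full spectrum that is not a power of $z$ generates a subalgebra abstractly isomorphic to $C^*(\mathbb{Z})$, yet the prescribed formula forces $E$ to be integration against Haar measure, whose range is $\mathbb{C}1$), so your decision to work with the canonical inclusion is the right interpretation of the statement. Two small points to tighten: justify $1 \in A$ rather than assert it (a conditional expectation is a norm-one positive projection, so $E(1) \neq 0$, and the hypothesized formula then forces $U^0 \in A$), and note explicitly that amenability of the abelian group $M$ is what makes \cite[Corollary 2.5.12]{BrownOzawa} apply equally to the universal algebra $C^*(M)$.
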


Let $R$ be a ring and $M$ be an $R$-module.  A submodule $N \subseteq M$ is a subgroup of $M$ which is closed under the action of $R^{\times}$.  To study submodules and quotient modules in the context of operator algebras we use the results of the preceding section and the preceding result in the context of the Fock algebra associated to a module over an integral domain. The following is just a combination of the preceding result and Proposition \ref{invariance}.

\begin{prop} Let $A$ be a subalgebra of $C^*(M) \times_F R^{\times}$. There is a submodule $N \subseteq M$ with $A \cong C^*(N) \times_F R^{\times}$ if and only if there is a conditional expectation $E: C^*(M) \rightarrow A \cap C^*(M)$ such that $ E(U^m) = \begin{cases} U^m & U^m \in A \\ 0 & \mbox{ otherwise} \end{cases}$ and $\sigma_s(A) \subseteq A$ for all $ s \in S$. \end{prop}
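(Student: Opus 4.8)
The plan is to run both implications by separating the $C^*$-algebra content from the semicrossed-product content, exactly as the sentence preceding the statement indicates. Throughout I would write $N = \{ m \in M : U^m \in A \}$, so that $N$ records which group generators lie in $A$ and $A \cap C^*(M)$ is the closed span of $\{ U^m : m \in N \}$. The preceding proposition then governs the passage between the subgroup $N$ and the base algebra $A \cap C^*(M)$, while Proposition \ref{invariance} governs the passage between an $R^{\times}$-invariant base algebra and the corresponding sub-Fock-algebra.

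For the forward implication I would assume $A \cong C^*(N) \times_F R^{\times}$ for some submodule $N$ and read off both conditions. Intersecting with the base gives $A \cap C^*(M) = C^*(N)$, and since $N$ is in particular a subgroup of $M$ the preceding proposition supplies the conditional expectation $E : C^*(M) \to C^*(N) = A \cap C^*(M)$ with the stated formula. Because $N$ is a submodule it is closed under the action of $R^{\times}$, so $\sigma_s(U^m) = U^{sm} \in C^*(N)$ whenever $m \in N$; hence $\sigma_s(A \cap C^*(M)) \subseteq A \cap C^*(M)$, which is the required invariance.

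For the converse I would start from the conditional expectation. Its defining formula is precisely the hypothesis of the preceding proposition, so $A \cap C^*(M) \cong C^*(N)$ with $N$ a subgroup of $M$. The invariance hypothesis then forces $U^{sm} = \sigma_s(U^m) \in A \cap C^*(M)$ for every $m \in N$ and $s \in R^{\times}$, i.e.\ $sm \in N$; thus $N$ is closed under $R^{\times}$ and is therefore a submodule. Applying Proposition \ref{invariance} to the $R^{\times}$-invariant subalgebra $C^*(N) \subseteq C^*(M)$ yields the inclusion $C^*(N) \times_F R^{\times} \subseteq C^*(M) \times_F R^{\times}$.

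The step I expect to be the real obstacle is the final identification $A = C^*(N) \times_F R^{\times}$ in the converse, rather than merely an embedding. For this one must know that $A$ contains every Fock isometry $S_r$ and is generated by $A \cap C^*(M)$ together with these isometries. In the forward direction this is automatic, since $S_r = S_r U^0$ and $U^0 = 1 \in C^*(N)$ already place each $S_r$ inside $C^*(N) \times_F R^{\times}$; in the converse it must be secured by the standing convention that the subalgebras under consideration contain the generating isometries of the semicrossed product. Granting this, $A$ is the closed algebra generated by $C^*(N)$ and $\{ S_r \}$, which is exactly the copy of $C^*(N) \times_F R^{\times}$ produced by Proposition \ref{invariance}, completing the argument.
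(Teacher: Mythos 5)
Your proposal is correct and takes essentially the same route as the paper: the paper gives no written proof beyond the remark that the proposition ``is just a combination of the preceding result and Proposition \ref{invariance},'' and your argument is exactly that combination carried out in detail. Your closing caveat is also well placed—the converse genuinely needs the convention that $A$ contains the isometries $S_r$ and is generated by them together with $A \cap C^*(M)$, since otherwise $A = C^*(M)$ satisfies both hypotheses (take $E$ to be the identity) while failing to be isomorphic to any $C^*(N) \times_F R^{\times}$—a point the paper leaves entirely implicit.
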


We look now to find an alternative that does not necessarily require the latter condition. Specifically assume that $A$ is a subalgebra of $C^*(M) \times_F R^{\times}$ and that $N$ is a subgroup of $M$. Then we consider the Hilbert space $ \mathcal{H}_N:=\ell^2(M/N) \otimes \ell^2(R^{\times}$, which we call the quotient Hilbert space for the pair $(M,N)$. For every $m \in M$ we define $U^m: \mathcal{H}_N \rightarrow \mathcal{H}_N$ on elementary tensors by $U^m (v_{g+N} \otimes u_r) = v_{(mr+g) +N} \otimes u_r$ and extending by linearity to all of $B( \mathcal{H}_N)$. Similarly for $t \in R^{\times}$ we define $S_t(v_{g+N} \otimes u_r) =v_{g+N} \otimes u_{rt}$ and extending to all of $B(\mathcal{H}_N)$. We call the collection of operators $\{ U^m, S_r \}$ the quotient operators for the pair $(M,N)$.

\begin{thm}
Let $N$ be a subgroup of the $R$-module $M$.  Then $N$ is a submodule if and only if the quotient operators for the pair $(M,N)$ give rise to a covariant representation of $C^*(M) \times_F R^{\times}$.
\end{thm}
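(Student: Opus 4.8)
The plan is to recognize the quotient operators as the canonical Fock operators attached to the quotient \emph{module} $M/N$, so that the whole statement collapses to the observation that $M/N$ carries an $R$-module structure precisely when $N$ is a submodule. The two implications then run in opposite directions around this single fact, and the only delicate point is the well-definedness of the $R^{\times}$-action on the cosets of $N$.

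For the forward implication I would assume $N$ is a submodule and give $M/N$ the action $r\cdot(m+N)=rm+N$; this is well-defined exactly because $r(m-m')\in N$ whenever $m-m'\in N$, which is the defining property of a submodule. With this structure $M/N$ is an $R$-module, and a direct comparison of formulas shows that the quotient operators are nothing but the operators of the first Proposition applied to $M/N$ acting on $\ell^2(M/N)\otimes\ell^2(R^{\times})$: the formula $U^m(v_{g+N}\otimes u_r)=v_{(rm+g)+N}\otimes u_r$ is just the Fock formula for the coset $m+N$, and $S_r$ is the corresponding shift. Hence items (1)--(5) hold automatically, so the pair is the canonical Fock representation of $M/N$ and in particular a covariant representation of $C^*(M/N)\times_F R^{\times}$; composing with the $R^{\times}$-equivariant quotient $*$-homomorphism $C^*(M)\to C^*(M/N)$ induced by $M\to M/N$ (equivariance is again exactly $rN\subseteq N$) lands this inside $C^*(M)\times_F R^{\times}$.

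For the converse I would argue by contraposition. Since the quotient operators act on $\ell^2(M/N)$, the representation they furnish must collapse $C^*(N)$: on the slice $\ell^2(M/N)\otimes u_1$ one has $U^n(v_{g+N}\otimes u_1)=v_{(n+g)+N}\otimes u_1=v_{g+N}\otimes u_1$ for $n\in N$, so the associated covariant representation is the one attached to the quotient exactly when $U^n=1_{\mathcal{H}_N}$ for every $n\in N$. A one-line computation pins down when this holds: $U^n=1_{\mathcal{H}_N}$ iff $v_{(rn+g)+N}\otimes u_r=v_{g+N}\otimes u_r$ for all $g$ and all $r$, i.e.\ iff $rn\in N$ for every $r\in R^{\times}$. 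Thus if $N$ fails to be a submodule, choosing $n\in N$ and $r\in R^{\times}$ with $rn\notin N$ produces the vector $v_{0+N}\otimes u_r$ on which $U^n$ acts nontrivially, so the operators do not descend to $M/N$ and cannot give the covariant representation attached to $C^*(M/N)\times_F R^{\times}$. Reading the same equivalence in the other direction recovers the forward claim, closing the loop.

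The step I expect to be the genuine obstacle is isolating the correct content of ``gives rise to a covariant representation.'' The isometry relation (2), the semigroup relation (4), and -- crucially -- the covariance identities of (5) are satisfied by the quotient operators for \emph{any} subgroup $N$: they use only commutativity of $R$ (so that $msr=rms$) and the fact that translation by $rm$ permutes the cosets of $N$, neither of which sees the module condition. The whole distinction therefore lives in item (3), namely whether $\{U^m\}$ descends to the group $M/N$, equivalently whether the representation of $C^*(M)$ trivializes $C^*(N)$; and this is precisely the equivalence $rn\in N\ \forall r\iff N$ a submodule worked out above. Making that reduction explicit, rather than grinding through (1)--(5) which hold regardless, is the heart of the argument.
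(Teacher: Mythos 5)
Your proof is correct and its skeleton is the paper's: the forward direction is identical (for a submodule $N$, the quotient operators are exactly the canonical Fock operators of the $R$-module $M/N$, pulled back along the equivariant quotient $q\colon C^*(M)\to C^*(M/N)$), and your converse rests on the same one-line coset computation. The difference is in how the converse extracts its contradiction. The paper reads ``$U^n=1$ by definition'' for $n\in N$ and then routes through the algebra: the covariance identity $U^nS_r=S_rU^{rn}$ together with $S_r^*S_r=1$ forces $U^{rn}=1$, contradicting the direct fact that $U^{rn}\neq 1$ (on the slice $u_1$ it translates by $rn+N\neq N$). You bypass that detour and contradict the same requirement at its source: the quotient operator $U^n$ itself already fails to be the identity, since it moves $v_{0+N}\otimes u_r$ by the coset $rn+N$. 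These are the same computation in two guises ($U^n$ seen on the slice $u_r$ versus $U^{rn}$ seen on the slice $u_1$), so both arguments are sound under the same reading of the statement; what yours buys is transparency about that reading. Indeed, your closing observation is the real added value: the isometry, semigroup, and covariance relations (2), (4), (5) hold for the quotient operators of an \emph{arbitrary} subgroup $N$ (only commutativity of $R$ is used), so the dichotomy in the theorem is carried entirely by whether $m\mapsto U^m$ is well defined on cosets, i.e., whether the induced representation of $C^*(M)$ trivializes $C^*(N)$. The paper compresses exactly this point into the unexplained phrase ``by definition,'' and without that reading the covariance identity alone could never produce a contradiction, since it holds verbatim for every subgroup; making the reduction explicit, as you do, is a genuine clarification of what the theorem asserts rather than a different proof of it.
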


\begin{proof}
If $N$ is a submodule for $N$ then the quotient operators are acting on the canonical Hilbert space for $C^*(M/N) \times_F R^{\times}$ and hence they will give rise to a covariant representation of $C^*(M) \times_F R^{\times}$, via the quotient map $q: C^*(M) \rightarrow C^*(M/N)$.

For the reverse direction notice that $N$ is an $R$-submodule if and only if $rn \in N$ for every $r \in R^{\times}$ and $ n \in N$. So, if $M$ is not an $R$-submodule then there must be some $r \in R^{\times}$ and $ n \in N$ such that $rn \not\in N$.  Notice that $U^n = 1$ by definition but $U^{rn} \neq 1$ but the covariance condition would imply that $S_rU^{rn} = U^{n} S_r$ for any $r$.  This would force the conclusion that $S_r U^{rn} = S_r$ but since $S_r$ is an isometry it follows that $U^{rn} = 1$ which is a contradiction.
\end{proof}

\begin{remark} As was noted in the proof notice that if $N$ is an $R$-submodule of $M$ then $C^*(N)$ is trivialized by the representation induced by the quotient operators.
\end{remark}

We then have the simple corollary concerning quotient modules.

\begin{cor}
Let $M$ be an $R$-module and $\pi: C^*(M) \times_F R^{\times} \rightarrow A \times_F R^{\times}$ be a completely contractive homomorphism.  Then $A \cong C^*(M/N)$ for some $R$-submodule $N$ if and only if $\pi^{-1}(\mathbb{C}) = C^*(N)$. 
\end{cor}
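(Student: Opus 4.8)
The plan is to leverage the theorem immediately preceding this corollary, which characterizes when the quotient operators for a pair $(M,N)$ give a covariant representation, together with Theorem \ref{exact} on trivializing subalgebras. The statement is a biconditional, so I would treat the two directions separately, but both hinge on identifying the kernel of the quotient structure with the subgroup $N$ via the group $C^*$-algebra machinery.

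For the forward direction, suppose $A \cong C^*(M/N)$ for some $R$-submodule $N$. Since $N$ is a submodule, the previous theorem tells us the quotient operators for $(M,N)$ give a genuine covariant representation of $C^*(M) \times_F R^{\times}$, factoring through the quotient map $q \colon C^*(M) \rightarrow C^*(M/N)$. The map $\pi$ restricted to $C^*(M)$ is then exactly this natural surjection $C^*(M) \rightarrow C^*(M/N)$ arising from the short exact sequence $1 \rightarrow N \rightarrow M \rightarrow M/N \rightarrow 1$. By Theorem \ref{exact}, this surjection trivializes $C^*(N)$, meaning precisely that $\pi^{-1}(\mathbb{C}1) = C^*(N)$ at the level of the group algebra, and the remark following the previous theorem records exactly this observation. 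One must check that the preimage computation is unaffected by the semicrossed structure, i.e.\ that no element outside $C^*(M)$ maps into the scalars; this follows because the $S_r$ are isometries that shift the $\ell^2(R^{\times})$ factor, so any nontrivial $S_r$-component cannot collapse to a multiple of the identity.

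For the converse, assume $\pi^{-1}(\mathbb{C}1) = C^*(N)$ for a subgroup $N \subseteq M$. I would first argue that $N$ must in fact be a submodule: the covariance relation $U^m S_r = S_r U^{rm}$ from part \eqref{respectsmodule} of the opening proposition, combined with the fact that $\pi(U^n) \in \mathbb{C}1$ for $n \in N$, forces $\pi(U^{rn})$ into the scalars as well, so that $rn \in N$ for every $r \in R^{\times}$; this is the same isometry-cancellation argument used in the proof of the preceding theorem. Once $N$ is known to be a submodule, Proposition \ref{trivializing} identifies $A \cap C^*(M) \cong C^*(N)$'s complementary quotient as $C^*(M/N)$, and the factorization $\pi \colon C^*(M) \times_F R^{\times} \rightarrow A \times_F R^{\times}$ through the quotient module structure gives $A \cong C^*(M/N)$.

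The main obstacle I anticipate is the converse direction's first step: showing that the hypothesis $\pi^{-1}(\mathbb{C}1) = C^*(N)$ already pins $N$ down as a \emph{submodule} rather than merely a subgroup. The delicate point is that $\pi$ is only assumed completely contractive, not a $*$-homomorphism on the whole semicrossed product, so I must be careful to use the covariance relations and the isometry property of the $S_r$ rather than multiplicativity that may fail on the nonselfadjoint part. Everything else reduces to assembling Theorem \ref{exact}, Proposition \ref{trivializing}, and the preceding theorem, which together do the structural heavy lifting.
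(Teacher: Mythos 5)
Your proposal is correct and takes essentially the same route as the paper: the forward direction is an application of Theorem \ref{exact}, and the backward direction reduces to the preceding theorem on quotient operators and submodules --- you simply inline its isometry-cancellation argument (applying covariance $U^nS_r = S_rU^{rn}$ and $T_r^*T_r=1$ to force $\pi(U^{rn})\in\mathbb{C}1$, hence $rn\in N$) rather than citing the theorem as a black box. The paper's own proof is terser but relies on exactly the same two ingredients, so your added verifications (e.g.\ that nontrivial $S_r$-components cannot land in the scalars) are elaborations, not deviations.
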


\begin{proof} 
The forward direction is just an implication of Theorem \ref{exact}. For the backward direction notice that once we know that $\pi^{-1}(\mathbb{C}) = C^*(N)$ then we only need to verify that the quotient operators induce a completely contractive representation. But that follows from the fact that $ \pi$ is a completely contractive representation of $C^*(M) \times_F R^{\times}$.
\end{proof}

This example now motivates similar concepts for general semicrossed products. Given two semicrossed products $C(X) \times_{\mathcal{F}} S$ and $C(Y) \times_{\mathcal{F}} S$ which are generated by $ \{ C(X) \} \cup \{ S_t \}$ and $\{ C(Y) \} \cup \{ T_t \}$. We say that the latter is a quotient semicrossed product of the former if there is a covariant representation $ \pi: C(X) \times_{\mathcal{F}} S \rightarrow C(Y) \times_{\mathcal{F}} S$ which is surjective and satisfies $\pi(S_t) = T_t$ for all $t \in S$. In effect we are requiring a surjective $*$-homomorphism $ \pi: C(X) \rightarrow C(Y)$ such that $ T_t\pi( f(x)) = \pi(\alpha_t(f(x))) T_t$ for all $t \in S$.

Next we say that $C(Y) \times_{\mathcal{F}}S$ is a sub-semicrossed product of $C(Z) \times_{\mathcal{F}} S$ if there is a quotient semi-crossed product $C(X) \times_{\mathcal{F}} S$ such that the composition     sends $C(Z)$ to the identity in $C(X)$ and fixes the semigroup generators.

Using Gelfand theory we can readily see that the following are required for the existence of a sub-semicrossed product.

\begin{itemize}
\item A continuous injection $ \pi: X \rightarrow Z$.
\item A continuous surjection $\tau: Y \rightarrow X$.
\item A continuous surjection $\sigma: X \rightarrow N$ where $N$ is the zero set of the ideal $J$ corresponding to the quotient $C(Y) /J \cong C(X)$.
\item The continuous map $\pi \circ \tau$ must be a constant map.
\item The action of $S$ commutes with $\pi$ and $ \tau$ (i.e.\ $\pi \circ \alpha_s = \beta_s \circ \tau$ for all $s$.
\end{itemize}
 
Notice that the construction for submodules and quotient module operator algebras are special cases of this construction for general semi-crossed products.

Although the above list gives necessary conditions there is no claim that these conditions are sufficient. A useful categorical approach to this question would necessitate a complete set of necessary and sufficient conditions. Such an approach would also likely require considering the different universal semicrossed products to give rise to a reasonable theory.

\section{Finite generation}

One view of the collection of algebras considered in this paper is as a class of readily understood semicrossed products that can motivate questions in the general case of semicrossed products.  We consider an example of this type of process in this section.  We will focus on finitely generated modules over an integral domain $R$. We remind the reader that a module $M$ is finitely generated as an $R$-module if there exists some $m_1, m_2, \cdots, m_n \in M$ such that for every $m \in M$ there is $r_1, r_2, \cdots, r_n \in R$ such that $ m = r_1m_1 + r_2m_2 + \cdots + r_nm_n$.  The set $ \{ m_1, m_2, \cdots, m_n \}$ is called a generating set for $M$.  In the case that $M$ has a generating set of size $1$ we say that $M$ is cyclic.

We now consider how this translates to a property of operator algebras, specifically we will focus first on the $C^*$-algebra generated by $C^*(M) \times_F R^{\times}$ inside $B(\ell^2(M) \otimes \ell^2(R^{\times}))$ to motivate the general definition. 

If $M$ is cyclic as an $R$-module then there is some $m \in M$ such that for any $n \in M$ there is $r \in R$ such that $rm = n$.  Inside the $C^*$-envelope of the semicrossed product we then have that $S_r^* U_mS_r = U^n$.  It follows that $ C^*(M)$ is the closure of the unitization of the subspace generated by $ \{ S_r^*U^mS_r: r \in R^{\times} \}$.  We can take this as a general definition for a semicrossed product. Let $C(X) \times_{{\mathcal{F}}} S$ be a Fock algebra associated to an action of $S$ (an abelian discrete semigroup) on $C(X)$ acting as completely contractive endomorphisms.  We will call the the closure of the unitization of the subspace spanned by $\{ S_r^*f(x)S_r: r \in R^{\times} \}$ the cyclic subspace generated by $ f$ and will denote it by $ \langle \langle f \rangle \rangle$. We can say that the action is cyclic if there is some $f \in C(X)$ such that $C(X) = \langle \langle f \rangle \rangle$.

\begin{example} Let $\sigma: X \rightarrow X$ be the identity map.  Then $ S = \mathbb{Z}_+$ and $S$ acts on $C(X)$ by iterating the map $\sigma$. In this case $C(X) \times S$ is cyclic if and only if $X$ consists of one or two points.  To see this notice that that for any $f(x)$ we have $ S_n^*f(x) S_n = f(x)$ for all $n$ and hence \[ \langle \langle f \rangle \rangle = \begin{cases} {\rm span} \{ 1, f(x) \}  \cong \mathbb{C}^2 &: \mbox{ if } f(x) \neq \lambda\cdot 1 \mbox{ for any } \lambda \in \mathbb{C} \\ \mathbb{C} & \mbox{ otherwise} \end{cases}. \] Which happens if and only if $X$ consists of at most two points.
\end{example}

\begin{example} Let $X = \{ 0, 1, \cdots, n-1 \}$ and $ \sigma: X \rightarrow X$ be given by $\sigma(i) = \begin{cases} i+1 &: i \leq n-2 \\ 0 &: i = n-1 \end{cases}$.  Let $\mathbb{Z}_+$ act on $X$ by iterating $\sigma$.  Then $C(X) \times S$ is cyclic since $ f(x) = \chi_{1} \in C(X)$ and $S_r^*f(x)S_r = \chi_{r \mod n}$ so that $C(X) = {\rm span} \{ S_r^*f(x)S_r: 1 \leq r \leq n \} \subseteq \langle \langle f(x) \rangle \rangle$.
\end{example}

This points to a general result. We denote by $\mathcal{O}(x) = \overline{ \{ \sigma_n(x) : n \in \mathbb{Z}_+ \}}$.

\begin{prop} Let $X$ be a totally disconnected compact set and $\sigma: X \rightarrow X$ be a continuous map.  Then if there is some $z$ such that $ X \setminus \mathcal{O}(z) \subseteq \{ y \}$ for some point $y$ then $C(X) \times \mathbb{Z}_+$ is cyclic. \end{prop}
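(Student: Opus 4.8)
The plan is to rewrite cyclicity as a density statement about linear spans along the orbit and then reduce it to a cyclic-vector (deconvolution) problem for a shift. Since each $S_r$ is an isometry, $S_r^*S_r = 1$, and the covariance relation reads $f\,S_r = S_r\,\alpha_r(f)$ with $\alpha_r(f) = f\circ\sigma_r$, we get $S_r^* f S_r = \alpha_r(f) = f\circ\sigma_r$. Hence $\langle\langle f\rangle\rangle = \overline{\operatorname{span}}\bigl(\{1\}\cup\{f\circ\sigma_r : r\in\mathbb{Z}_+\}\bigr)$, and the proposition amounts to producing a single $f\in C(X)$ for which this closed linear span is all of $C(X)$. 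By Hahn--Banach together with the Riesz representation theorem, this is equivalent to showing that the only regular Borel measure $\mu$ on $X$ with $\int 1\,d\mu = 0$ and $\int (f\circ\sigma_r)\,d\mu = 0$ for all $r$ is $\mu = 0$; equivalently, writing $\mu_r$ for the pushforward $(\sigma_r)_*\mu$, that $\int f\,d\mu_r = 0$ for all $r$ forces $\mu = 0$.

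Next I would exploit the orbit structure. Put $z_n = \sigma_n(z)$, so that $X = \overline{\{z_n : n\ge 0\}}\cup\{y\}$, and note the key identity $(f\circ\sigma_r)(z_n) = f(z_{n+r})$. Thus, along the orbit, the family of iterates is completely encoded by the single scalar sequence $h_m := f(z_m)$: the function $f\circ\sigma_r$ takes the value $h_{n+r}$ at $z_n$, a Hankel/shift pattern. Because $X$ is totally disconnected, $C(X)$ is the closed span of its clopen indicators, and every $g\in C(X)$ is determined on $\mathcal{O}(z)$ by its values on the dense orbit $\{z_n\}$. I therefore plan to prescribe the values $h_m = f(z_m)$ to be a suitably (super-exponentially) decaying sequence, normalized and then reduced by the constant $1$ to land in a $c_0$-type space, so that the shifts $(h_{n+r})_n$, $r\ge 0$, together with the constants, have dense span there; total disconnectedness is what lets me realize any such prescription by a genuine continuous $f$. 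The isolated point $y$ contributes only the extra clopen indicator $\chi_{\{y\}}$ and one free value $f(y)$, which I fold into the choice of $f$, separating $y$ from the orbit point with which it may eventually collide under $\sigma$.

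The step I expect to be the main obstacle is exactly this last density, i.e. the cyclic-vector/deconvolution claim: one must choose $h$ so that no nonzero functional (measure) annihilates every shift $(h_{n+r})_n$. The difficulty is genuine because the forward iterate $f\circ\sigma_r$ transports mass only backward along the orbit, so a localized $f$ recovers only finitely many early orbit points; reaching the entire orbit closure forces an $f$ of full support with carefully tuned decay, which is precisely a cyclicity statement for a (weighted backward) shift operator. Note that Stone--Weierstrass is unavailable here, since $\langle\langle f\rangle\rangle$ is only the linear span of the iterates and not the subalgebra they generate, so the argument must be genuinely operator-theoretic. Concretely I would establish that an annihilating measure $\mu$ satisfies $\int f\,d\mu_r = 0$ for all $r$, expand this against the decaying weights $h$, and solve the resulting triangular (Vandermonde/Toeplitz) system to conclude $\mu = 0$, finally using compactness and continuity to pass the conclusion from the dense orbit to its limit points and to $y$.
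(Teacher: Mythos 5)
Your opening reductions are fine: the identity $S_r^*fS_r=f\circ\sigma_r$ is the correct covariance computation, and the Hahn--Banach/Riesz reformulation (no nonzero measure annihilates $\{1\}\cup\{f\circ\sigma_r\}$) is a legitimate restatement of cyclicity. The fatal step is the claim that ``total disconnectedness is what lets me realize any such prescription by a genuine continuous $f$.'' It does not. Total disconnectedness supplies clopen sets; it does not let you assign values freely along the orbit, because continuity forces $f$ to respect every accumulation relation the orbit has with itself, and the hypothesis $X\setminus\mathcal{O}(z)\subseteq\{y\}$ does \emph{not} make the orbit points isolated. Concretely, let $X=\{0,1\}^{\mathbb{N}}$ (identified with the $2$-adic integers) and let $\sigma$ be the odometer $x\mapsto x+1$. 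This is a minimal homeomorphism of a totally disconnected compact set, so $\mathcal{O}(z)=X$ for every $z$ and the hypothesis holds vacuously. Minimality means every tail $\{z_m:m\geq N\}$ is dense, so any continuous $f$ with $f(z_m)=h_m\to 0$ vanishes on a dense set and hence identically; subtracting a constant does not help, as $f$ would then be constant. Thus no super-exponentially decaying prescription is realizable by \emph{any} continuous function, and your construction cannot even begin precisely in the regime where the orbit is not a sequence of isolated points --- the only regime where the statement has real content. (The proposition itself survives this example: for the odometer one can take $f$ with absolutely summable, everywhere nonvanishing Fourier coefficients over the dual of $\mathbb{Z}_2$ and argue that an almost periodic sequence vanishing on $\mathbb{Z}_+$ vanishes identically; but that is a different argument from yours.)

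Two further points. First, your assertion that Stone--Weierstrass is unavailable because $\langle\langle f\rangle\rangle$ is only a linear span is too quick, and it is exactly where you diverge from the paper: the paper chooses $f=\chi_{\{z\}}$, and for an indicator function the span of the elements $S_n^*fS_n$ \emph{is} automatically a self-adjoint subalgebra (products of indicators of equal or disjoint sets stay in the span or vanish), so Stone--Weierstrass applies to the span; that is the entire point of the paper's choice of generator (though the paper's argument, in fairness, also implicitly needs the orbit points to be isolated for $\chi_{\{z\}}$ to be continuous). Second, even where your decaying $f$ exists, reducing the annihilation condition to the Hankel system $(h_{n+r})_n$ only pairs $f\circ\sigma_r$ against measures supported on the orbit points $\{z_n\}$; a general regular Borel measure can be continuous, or can sit on limit points $p$ of the orbit, where $\int f\circ\sigma_r\,d\mu=f(\sigma_r(p))$ is not a fixed linear functional of the sequence $h$, and your closing sentence about ``passing from the dense orbit to its limit points by compactness and continuity'' is precisely the missing step, not a routine one.
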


\begin{proof}
As $X$ is totally disconnected it is generated by projections of the form $ \chi_{\{ x \}}$ such that $ x \in X$.  Let $f(x) = \chi_{\{ z \}}$ then $ S_n^*f(x)S_n = \chi_{\{ \sigma^n(z) \}}$ and notice that ${\rm span} \{ S_n^*f(x)S_n \}$ is a subalgebra of $C(X)$. The unitization of this algebra separates the points of $X$ and is unital and hence by the Stone-Weierstrass Theorem we have that $\langle \langle f(x) \rangle \rangle = C(X)$.
\end{proof}

Considering arbitrary compact sets $X$ points out two problems that can arise for a fixed $f$: 
\begin{itemize}
\item $\langle \langle f \rangle \rangle$ need not be an algebra.
\item $\langle \langle f \rangle \rangle$ need not be self-adjoint.
\end{itemize}
If we know a-priori these two facts then the semicrossed product is cyclic if and only if the set $ \langle \langle f \rangle \rangle$ separates points (via the Stone-Weierstrass Theorem).  In general this is not going to be true.

\begin{example}
Let $X = [-1,1]$ and $\sigma(t) = t^2$, and $f(t) = t$. Then $\langle \langle f \rangle \rangle$ is not an algebra since $t^3 = f^3$ but $f(\sigma^n)$ is an even function for all $ n \geq 1$. Now calculus tells us that $t^3 \neq \lambda_1 + \lambda_2 t + g(t)$ for any even function $g(t)$ since the second derivative of $t^3 $ is an odd function but the second derivative of the latter term is an even function. It follows that $ \langle \langle f(t) \rangle \rangle$ is not an algebra. 
\end{example}   

Of course, for any cyclic module over a commutative ring $R$, the associated semicrossed product algebra is cyclic (by definition).  
It follows that one can easily construct examples of cyclic semicrossed products.  The reason this appears to work, however, is that the semigroup is ``large'' which means that $C^*(M)$ will ``move'' a lot when we consider all actions of $R^{\times}$ on it.

The general case of $n$-generation is similar.  We translate the commutative algebra definition into the context of $C^*(M) \times_{\mathcal{F}} R^{\times}$. Then we tweak the definition into the more general semicrossed product context.  For brevity we will proceed right to the general definition. We denote by $ \langle \langle f_1, f_2, \cdots, f_n \rangle \rangle$ to be the closure of the span of all elements of the form $ g_1\cdot g_2\cdot  \cdots \cdot g_n$ where $ g_i \in \langle \langle f_i \rangle \rangle$. Notice that since each of the sets $ \langle \langle f_i \rangle \rangle$ is unital then so is $ \langle \langle f_1, f_2, \cdots, f_n \rangle \rangle$ and we also have that $ \langle \langle f_i \rangle \rangle \subseteq \langle \langle f_1, f_2, \cdots, f_n \rangle \rangle$ for all $i$.

\begin{prop} If $X$ is finite then $C(X) \times S$ is finitely generated by no more than $|X|$ elements of $C^*(X)$. \end{prop}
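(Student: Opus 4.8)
The plan is to exhibit an explicit generating set of size $|X|$ built from the minimal projections of $C(X)$. Write $X = \{x_1, x_2, \ldots, x_n\}$ with $n = |X|$, and for each $i$ set $f_i = \chi_{\{x_i\}}$, the characteristic function of the point $x_i$. Since $X$ is finite, $C(X) \cong \mathbb{C}^n$ and the projections $\{f_1, \ldots, f_n\}$ form a linear basis; in particular every $g \in C(X)$ is the linear combination $g = \sum_i g(x_i) f_i$. The goal is therefore reduced to showing that each $f_i$ lies in $\langle \langle f_1, \ldots, f_n \rangle \rangle$, for then this closed span contains a basis of the finite-dimensional space $C(X)$ and so must equal all of $C(X)$.

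First I would observe that $f_i \in \langle \langle f_i \rangle \rangle$ for every $i$. This is immediate from the definition of the cyclic subspace: taking $s$ to be the identity of $S$ (so that $S_s = 1_{\mathcal{H}}$) gives $S_s^* f_i S_s = f_i$, which places $f_i$ in the spanning set defining $\langle \langle f_i \rangle \rangle$. Next I would invoke the containment $\langle \langle f_i \rangle \rangle \subseteq \langle \langle f_1, \ldots, f_n \rangle \rangle$ recorded in the remark preceding the statement, which holds because one may set the remaining factors $g_j = 1$ for $j \neq i$ in the defining products. Combining these two facts yields $f_i \in \langle \langle f_1, \ldots, f_n \rangle \rangle$ for all $i$.

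It then remains only to assemble the conclusion: since $\langle \langle f_1, \ldots, f_n \rangle \rangle$ is a closed subspace of $C(X)$ containing the spanning set $\{f_1, \ldots, f_n\}$, we obtain $\langle \langle f_1, \ldots, f_n \rangle \rangle = C(X)$, so the action is finitely generated by the $n = |X|$ functions $f_1, \ldots, f_n$ of $C(X)$.

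The argument is short precisely because it uses essentially none of the semigroup dynamics; only the unit of $S$ enters. The one point requiring care is the verification $f_i \in \langle \langle f_i \rangle \rangle$, which rests on the convention $S_s = 1_{\mathcal{H}}$ for the identity $s$ of $S$, and I would make this explicit. I do not anticipate a genuine obstacle. The real interest of the bound lies in the opposite direction: because the dynamics are never exploited, $|X|$ is typically far from optimal, and the sharper question — as the earlier cyclicity examples suggest — is how far the action of $S$ can push the number of generators below $|X|$.
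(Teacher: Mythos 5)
Your proof is correct and follows essentially the same route as the paper: take the point characteristic functions $f_i = \chi_{\{x_i\}}$, observe they span $C(X)$, and conclude $C(X) = \langle \langle f_1, \ldots, f_n \rangle \rangle$ without using the $S$-action at all. Your added verification that $f_i \in \langle \langle f_i \rangle \rangle$ (via the identity of $S$ and the unitization convention) is a detail the paper leaves implicit, but it is the same argument.
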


\begin{proof}
This requires no action by $S$ since is $X = \{ x_1, x_2, \cdots, x_n \}$ then $C(X)$ is spanned by $ f_i := \chi_{\{ x_i \}}$ and hence $C(X) = \langle \langle f_1, f_2, \cdots, f_n \rangle \rangle$.
\end{proof}

We can often do with fewer generating elements. 

\begin{example} 
Let $X$ be finite then $C(X) \times \mathbb{Z}_+$ is finitely generated by $k$ elements where $k$ is the number of components in the orbit of the action. To see this, for each component of the orbit of $\sigma$ choose a single element $x_i$.  Then if we set $ g_i = \chi_{ \{ x_i \}}$ then $\langle \langle g_i, \rangle \rangle$ is the unitization of $\{ f(x): \mbox{ the support of } f(x) \subseteq \mathcal{O}(x_i ) \}$. The result now follows.
\end{example}

Notice that these constructions work for any representation which is faithful on $C^*(M)$ and sends the $S_g$ to an isometry satisfying the covariance conditions.

We have the following proposition.

\begin{prop} Assume that $C(X) \times_{\mathcal{F}} \times S$ is finitely generated and there is a surjective homomorphism $\pi: C(X) \times_{\mathcal{F}} S \rightarrow C(Y) \times_{\mathcal{F}}  S$.  Then $C(Y) \times_{\mathcal{F}} S$ is finitely generated. \end{prop}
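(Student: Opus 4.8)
The plan is to show that the images $\pi(f_1), \ldots, \pi(f_n)$ generate $C(Y) \times_{\mathcal{F}} S$, where $f_1, \ldots, f_n \in C(X)$ is a generating set witnessing finite generation of the source, i.e. $C(X) = \langle \langle f_1, \ldots, f_n \rangle \rangle$. The first, and key, observation I would make is that the cyclic subspaces actually sit inside $C(X)$ itself. Since each $S_s$ is an isometry satisfying the covariance relation $f S_s = S_s \alpha_s(f)$, we have $S_s^* f S_s = S_s^* S_s \alpha_s(f) = \alpha_s(f) \in C(X)$, exactly as in the module computation $S_r^* U^m S_r = U^{rm}$. Hence $\langle \langle f \rangle \rangle$ is the closed unital span of the orbit $\{\alpha_s(f) : s \in S\}$ inside $C(X)$, and $\langle \langle f_1, \ldots, f_n \rangle \rangle$ is the closed span of products of such elements. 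This matters because it means that in applying $\pi$ we never need to evaluate it on the adjoints $S_s^*$, which do not lie in the non-selfadjoint semicrossed product; we only ever feed $\pi$ elements of $C(X)$.

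Second, I would record how $\pi$ interacts with the endomorphisms. By hypothesis $\pi$ is a surjective completely contractive homomorphism with $\pi(S_s) = T_s$, whose restriction to $C(X)$ is a surjective $*$-homomorphism onto $C(Y)$. Applying $\pi$ to the covariance relation gives $\pi(f) T_s = \pi(f S_s) = \pi(S_s \alpha_s(f)) = T_s \pi(\alpha_s(f))$, while the covariance relation in the target reads $\pi(f) T_s = T_s \beta_s(\pi(f))$ with $\beta_s$ the endomorphism of $C(Y)$. Cancelling the isometry $T_s$ on the left yields the intertwining $\pi(\alpha_s(f)) = \beta_s(\pi(f)) = T_s^* \pi(f) T_s$. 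In particular $\pi$ carries each generator $S_s^* f_i S_s = \alpha_s(f_i)$ of $\langle \langle f_i \rangle \rangle$ to the generator $T_s^* \pi(f_i) T_s = \beta_s(\pi(f_i))$ of $\langle \langle \pi(f_i) \rangle \rangle$, and $\pi(1) = 1$.

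Third, I would push this through the span-and-closure construction. Because $\pi$ is a bounded unital homomorphism, it is continuous, linear, and multiplicative, so it sends the closed unital span of $\{\alpha_s(f_i)\}$ into the closed unital span of $\{\beta_s(\pi(f_i))\}$; that is, $\pi(\langle \langle f_i \rangle \rangle) \subseteq \langle \langle \pi(f_i) \rangle \rangle$. Multiplicativity then gives $\pi(g_1 \cdots g_n) = \pi(g_1) \cdots \pi(g_n) \in \langle \langle \pi(f_1), \ldots, \pi(f_n) \rangle \rangle$ whenever $g_i \in \langle \langle f_i \rangle \rangle$, and continuity extends this to the closed span, so that $\pi(\langle \langle f_1, \ldots, f_n \rangle \rangle) \subseteq \langle \langle \pi(f_1), \ldots, \pi(f_n) \rangle \rangle$. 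Finally I combine this with the two extremal facts: finite generation of the source says $C(X) = \langle \langle f_1, \ldots, f_n \rangle \rangle$, and surjectivity of $\pi$ says $C(Y) = \pi(C(X))$. Together these force $C(Y) = \pi(\langle \langle f_1, \ldots, f_n \rangle \rangle) \subseteq \langle \langle \pi(f_1), \ldots, \pi(f_n) \rangle \rangle \subseteq C(Y)$, hence equality, and $C(Y) \times_{\mathcal{F}} S$ is finitely generated by $\pi(f_1), \ldots, \pi(f_n)$.

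I expect the only genuine obstacle to be the bookkeeping around adjoints: the cyclic subspace is defined inside the generated $C^*$-algebra using $S_s^*$, whereas $\pi$ is only a homomorphism of the non-selfadjoint algebras and is not a priori defined on $S_s^*$. The collapse $S_s^* f S_s = \alpha_s(f) \in C(X)$ resolves this cleanly, after which everything is a routine continuity-and-multiplicativity argument. The one point to handle with care is verifying that the intertwining $\pi \circ \alpha_s = \beta_s \circ \pi$ is genuinely available — it does follow purely from $\pi$ being a homomorphism with $\pi(S_s) = T_s$ together with the covariance relations in source and target, as computed above, so no extra hypothesis is needed.
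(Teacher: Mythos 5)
Your proof is correct and takes essentially the same route as the paper's: the paper's entire proof is the one-line assertion that $\langle \langle \pi(f_1), \ldots, \pi(f_n) \rangle \rangle = \pi(\langle \langle f_1, \ldots, f_n \rangle \rangle) = C(Y)$ when $\pi$ is surjective. Your write-up simply supplies the details the paper leaves implicit --- the collapse $S_s^* f S_s = \alpha_s(f) \in C(X)$, the intertwining $\pi \circ \alpha_s = \beta_s \circ \pi$, and the continuity/multiplicativity argument --- and your sandwich $C(Y) = \pi(C(X)) \subseteq \langle \langle \pi(f_1), \ldots, \pi(f_n) \rangle \rangle \subseteq C(Y)$ is a slightly cleaner way to finish, since it needs only one inclusion rather than the equality the paper asserts.
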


\begin{proof}
This relies on the fact that $ \langle \langle \pi(f_1), \pi(f_2), \cdots, \pi(f_n) \rangle \rangle = \pi ( \langle \langle f_1, f_2, c\dots, f_n \rangle \rangle) = C(Y)$ if $\pi$ is surjective.
\end{proof}

It is well known \cite[Chapter 4]{Roman} that finite generation is not inherited by submodules.  We do not, however, know that a module is finitely generated if the associated semicrossed product is finitely generated.  If we has such a result then we would have an example where a sub-semicrossed product does not inherit the finitely generated property. 

\bibliographystyle{plain}

\end{document}